\documentclass [11pt, reqno]{amsart}
\usepackage{mathrsfs}
\usepackage{mathtools}
\usepackage{amssymb}
\usepackage{enumerate}
\usepackage{verbatim}
\usepackage{comment}

\usepackage[
hypertexnames=false, colorlinks, citecolor=black, linkcolor=blue, urlcolor=red]{hyperref}

\setlength{\textwidth}{15.5cm}			  %
\setlength{\textheight}{22cm}			  %
\setlength{\topmargin}{-.5cm}			  %
\setlength{\oddsidemargin}{6mm}			  %
\setlength{\evensidemargin}{6mm}		  %
\setlength{\abovedisplayskip}{3mm}		  %
\setlength{\belowdisplayskip}{3mm}		  %
\setlength{\abovedisplayshortskip}{0mm}	  %
\setlength{\belowdisplayshortskip}{2mm}	  %
\setlength{\baselineskip}{12pt}			  %
\setlength{\normalbaselineskip}{12pt}	  %
\normalbaselines						  %

\newenvironment{entry}
{\begin{list}{X}%
		{%
			\setlength{\labelwidth}{35pt}%
			\setlength{\leftmargin}{\labelwidth}
			\addtolength{\leftmargin}{\labelsep}%
			\setlength{\itemsep}{.4pc}%
		}%
	}
	{\end{list}} 

\newcommand{\T}{\mathbb{T}}

\newcommand{\Z}{{\mathbb  Z}}

\newcommand{\C}{{\mathbb  C}}
\newcommand{\R}{{\mathbb  R}}
\newcommand{\cH}{\mathcal{H}}
\newcommand{\cV}{\mathcal{V}}
\newcommand{\cE}{\mathcal{E}}

\numberwithin{equation}{section}

\newtheorem{thm}{Theorem}[section]
\newtheorem{lm}[thm]{Lemma}

\newtheorem{prop}[thm]{Proposition}
\newtheorem*{prop*}{Proposition}

\theoremstyle{definition}
\newtheorem{df}[thm]{Definition}
\newtheorem*{df*}{Definition}

\newcommand{\Ker}{\operatorname{Ker}}
\newcommand{\Ran}{\operatorname{Ran}}
\newcommand{\clos}{\operatorname{clos}}
\newcommand{\Arg}{\operatorname{Arg}}

\newcommand{\wt}{\widetilde}

\newcommand{\fdot}{\,\cdot\,}
\newcommand{\e}{\varepsilon}
\newcommand{\im}{\operatorname{Im}}
\newcommand{\re}{\operatorname{Re}}
\newcommand{\ran}{\operatorname{Ran}}
\newcommand{\spn}{\operatorname{span}}
\newcommand{\cspn}{\overline{\operatorname{span}}}
\newcommand{\rank}{\operatorname{rank}}
\newcommand{\dist}{\operatorname{dist}}

\newcommand{\cK}{\mathcal{K}}
\newcommand{\cM}{\mathcal{M}}
\newcommand{\ci}[1]{_{_{\scriptstyle #1}}}

\newcommand{\ut}[1]{^{\scriptstyle \text{\rm #1}}}
\newcommand{\utc}{\ut{c}}

\newcommand{\tup}[1]{\textup#1}

\theoremstyle{remark}
\newtheorem{rem}[thm]{Remark}
\newtheorem*{rem*}{Remark}
\newtheorem{defin}[thm]{Definition}

\renewcommand{\labelenumi}{\textup{(\roman{enumi})}}

\newcounter{vremennyj}

\newcommand\cond[1]{\setcounter{vremennyj}{\theenumi}\setcounter{enumi}{#1}\labelenumi\setcounter{enumi}{\thevremennyj}}


\begin{document}
\title[Inverse spectral problem]{A dynamical system approach 
	to the inverse spectral problem for Hankel operators: a model case}

\author{Zhehui Liang} 
\address{Department of Mathematics \\ Brown University \\ Providence, RI 02912 \\ USA}
\email{zhehui\_liang@brown.edu}

\author{Sergei Treil}
\thanks{Work of S.~Treil is 
	supported  in part by the National Science Foundation under the grants  
	DMS-1856719, DMS-2154321}
\address{Department of Mathematics \\ Brown University \\ Providence, RI 02912 \\ USA}
\email{treil@math.brown.edu}


\begin{abstract}
We present an alternative proof of the result by P.~G\'erard and S.~Grellier 
\cite{Ger-Grill_ISP_2014}, stating 
that
given two real sequences $(\lambda_n)_{n=1}^\infty$, $(\mu_n)_{n=1}^\infty$  satisfying the 
intertwining relations
\[
|\lambda_1| > |\mu_1| > |\lambda_2| > |\mu_2| > ...> |\lambda_n| > |\mu_n|>\ldots >0 , \qquad 
\lambda_n\to 0, 
\]
there exists a unique compact Hankel operator $\Gamma$ such that $\lambda_n$ are the (simple) 
eigenvalues of $\Gamma$ and $\mu_n$ are the simple eigenvalues of its truncation  $\Gamma_1$ 
obtained from $\Gamma$ by removing the first column. 

We use the dynamical systems approach originated in \cite{MPT-Acta}, and the proof is split into 
three independent parts.  The first one, which is a slight modification of a result in 
\cite{MPT-Acta} is an abstract operator-theoretic statement reducing the problem to the asymptotic 
stability os some operator. The second one is the proof of the asymptotic stability, which is 
usually the hardest part, but in our case of compact operators it is almost trivial. And the third 
part is an abstract version of the Borg's two spectra theorem, which is essentially a simple 
exercise in graduate complex 
analysis.  

\end{abstract}

\maketitle

\setcounter{section}{-1}
\section{Notation}
All operators act on or between Hilbert spaces, and we consider only separable Hilbert spaces. 
\begin{entry}
\item[$S$] the forward shift in $\ell^2$, $S(x_0, x_1, x_2, \ldots ) = (0, x_0, x_1, x_2, \ldots)$;
\item[$S^*$] adjoint of $S$, $S^* (x_0, x_1, x_2, \ldots ) = (x_1, x_2, x_3,  \ldots )$; 
\item[$|A|$] modulus of the operator $A$, $|A|:=(A^*A)^{1/2}$;
\item [$P_{\cE}$] the orthogonal projection  onto a subspace $\cE$.
\item [$a^*$] for $a\in\cH$, the symbol $a^*$ denotes the linear functional on  $\cH$, given by 
$a^*x =( x,a)\ci\cH $

\end{entry}

\section{Introduction and main results} 
A Hankel operator is a bounded linear operator in $\ell^2=\ell^2(\Z_+)$ with matrix whose entries 
depend on the sum of indices, 
\begin{align*}
\Gamma = \left(\gamma_{j+k}\right)_{j,k\ge0} \,.
\end{align*}
Denoting by $S$  the \emph{shift operator} in $\ell^2$,
\begin{align*}
S(x_0, x_1, x_2, \ldots ) &= (0, x_0, x_1, x_2, \ldots) \,, 
\intertext{and by $S^*$ its adjoint (the \emph{backward shift}), }
S^* (x_0, x_1, x_2, \ldots ) &= (x_1, x_2, x_3,  \ldots ) 
\end{align*}
we can see that an operator $\Gamma$ on $\ell^2$ is a Hankel operator if and only if 
\begin{align*}
\Gamma S = S^* \Gamma;  
\end{align*}
sometimes this formula is used for the definition of a Hankel operator.

In \cite{MPT-Acta} a complete description of self-adjoint operators unitarily equivalent to a 
Hankel 
operator was obtained. It was clear from the construction that such Hankel 
operator is not unique, and the question of finding additional spectral  conditions  providing 
uniqueness looks like a natural question. 

In a breakthrough series of papers \cite{Gerard 2010, Gerard 2012, Ger-Grill_ISP_2014, GG-Ast} 
P.~G\'erad 
and S.~Grellier  investigated the inverse 
problem for Hankel operators; their motivation come the study of the so-called cubic Szeg\"{o} 
equation, which is a completely integrable Hamiltonian system.  

One of their discoveries was that the spectral invariants of the \emph{pair} of Hankel operator 
$\Gamma$ and $\Gamma_1 = \Gamma S = S^*\Gamma$, \emph{completely} determine the operator $\Gamma$.

A simple illustration of that principle, is Theorem \ref{t:main 02} below, proved in 
\cite{Ger-Grill_ISP_2014}. 

In this paper we present a simple proof of this result. 
\begin{thm}
\label{t:main 02}  Given sequences $((\lambda_n)_{n=1}^{\infty}$  and  $(\mu_n)_{n=1}^{\infty}$ of 
non-zero real numbers, satisfying intertwining relations
\begin{align}
\label{intertwine 01}
|\lambda_{1}|>|\mu_{1}|>|\lambda_{2}|>|\mu_{2}|>....>|\lambda_{n}|>|\mu_{n}|>...>0, 
\end{align}
and such that $\lim\limits_{n \to \infty}\lambda_n= 0$, $\lim\limits_{n \to \infty}\mu_n= 0$ , 
there exists a unique self-adjoint compact Hankel operator $\Gamma$  such that non-zero eigenvalues 
of $\Gamma$ and $\Gamma S$ are simple, and coincide with $(\lambda_n)_{n=1}^{\infty}$ and  
$(\mu_n)_{n=1}^{\infty}$ respectively.  

  Moreover, $\ker \Gamma =\{ 0\}$ if and only if: 
\begin{align}
\label{trivial kernel condition 1}
    \sum\limits_{j=1}^{\infty} \Bigg( 1-\frac{\mu_{j}^{2}}{\lambda_{j}^{2}} \Bigg) = \infty \\
\label{trivial kernel condition 2}
\sum_{j=1}^\infty \left( \frac{\mu_j^2}{\lambda_{j+1}^2} -1 \right) =\infty 
\end{align}
\end{thm}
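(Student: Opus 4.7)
The approach rests on a rank-one identity relating $\Gamma$ and $\Gamma_1 = \Gamma S$. Since $\Gamma$ is self-adjoint and Hankel, $\Gamma_1$ is also self-adjoint ($(\Gamma S)^* = S^*\Gamma = \Gamma S$), and using $I - SS^* = e_0 e_0^*$ one gets
\begin{equation*}
\Gamma^2 - \Gamma_1^2 \ = \ \Gamma(I - SS^*)\Gamma \ = \ vv^*, \qquad v := \Gamma e_0,
\end{equation*}
so the positive compact operator $\Gamma^2$ is a positive rank-one perturbation of the positive compact operator $\Gamma_1^2$. By Aronszajn--Donoghue the eigenvalues $\lambda_n^2$ of $\Gamma^2$ and $\mu_n^2$ of $\Gamma_1^2$ must strictly interlace, which is the squared form of \eqref{intertwine 01}. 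The plan is to work with the pair $(\Gamma^2, v)$ rather than with $\Gamma$ directly, following the three-step template described in the abstract.

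For existence I would construct a model on a Hilbert space with orthonormal basis $(\phi_n)_{n\ge 1}$: set $H := \sum_n \lambda_n^2 \phi_n \phi_n^*$ and $v := \sum_n \alpha_n \phi_n$, where
\begin{equation*}
\alpha_n^2 \ = \ (\lambda_n^2 - \mu_n^2)\prod_{k\ne n} \frac{\lambda_n^2 - \mu_k^2}{\lambda_n^2 - \lambda_k^2}
\end{equation*}
is the unique positive weight, forced by partial-fraction comparison, making the spectrum of $H - vv^*$ equal to $\{\mu_n^2\}$; positivity of each $\alpha_n^2$ follows from \eqref{intertwine 01} and summability from $\lambda_n \to 0$. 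The abstract operator-theoretic theorem adapted from \cite{MPT-Acta} then promotes $(H, v)$ to a Hankel operator $\Gamma$ with $\Gamma^2 = H$ and $\Gamma e_0 = v$, contingent on the asymptotic stability of a certain contraction $T$ built from $(H, v)$. Since $H$ is compact, $T$ is itself compact with spectral radius strictly less than $1$, so the strong stability $T^n \to 0$ is automatic: this is the second ingredient, trivial here as the authors indicate. The signs of the $\mu_n$, invisible at the squared level, are pinned down by the choice of square root of $H$ compatible with the Hankel intertwining.

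For uniqueness I would use the Weyl--Nevanlinna function
\begin{equation*}
m(z) \ := \ \langle (H - z)^{-1} v, v \rangle \ = \ \sum_{n=1}^\infty \frac{\alpha_n^2}{\lambda_n^2 - z},
\end{equation*}
whose poles are $\{\lambda_n^2\}$, whose zeros are $\{\mu_n^2\}$ (the standard rank-one resolvent identity identifies the zero set of $1 - m$ with the spectrum of $H - vv^*$), and which satisfies $z\, m(z) \to -\|v\|^2 = -\sum_n(\lambda_n^2 - \mu_n^2)$ as $|z|\to\infty$. A Hadamard-type factorization argument---an abstract Borg two-spectra theorem, the third ingredient---shows these data determine $m$ uniquely, hence all the $\alpha_n$, hence $(H,v)$, and therefore $\Gamma$. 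For the kernel condition, $\ker\Gamma=\{0\}$ amounts to $H$ being injective together with $v$ being $*$-cyclic for $H$. Writing $m$ as an infinite product $-\|v\|^2 z^{-1}\prod_n(1-z/\mu_n^2)/(1-z/\lambda_n^2)$ and analysing the limit as $z\to 0$ translates the first requirement into the divergence $\prod_n(1-\mu_n^2/\lambda_n^2)\to 0$, equivalent to \eqref{trivial kernel condition 1}; the companion cyclicity requirement, obtained by running the same analysis on the pair $(\Gamma_1^2,\Gamma_1 e_0)$ at the other end of each interlacing gap, yields \eqref{trivial kernel condition 2}.

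The main obstacle I anticipate is the abstract realization step: verifying that the model pair $(H, v)$ does come from a \emph{Hankel} operator, i.e.\ that the intertwining $\Gamma S = S^*\Gamma$ can be arranged by the right choice of square root of $H$. This is the content imported from \cite{MPT-Acta}; the asymptotic stability and complex-analytic steps, as the authors emphasise, are much softer in this setting.
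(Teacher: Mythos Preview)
Your overall architecture matches the paper's: rank-one identity, abstract realization via \cite{MPT-Acta}, asymptotic stability of the relevant contraction, and a Borg-type reconstruction of the spectral measure from the two interlacing sequences. The formula you write for $\alpha_n^2$ is exactly the paper's $a_n$, and the Nevanlinna function $m$ is the paper's $F$. But two steps in your sketch are genuinely wrong, not just imprecise.

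First, the asymptotic stability. The contraction that must be shown stable is $\Sigma^* = R_1 R^{-1}$ (equivalently, the operator intertwining $R$ and $R_1$ via Douglas' lemma). This operator is \emph{not} compact and does \emph{not} have spectral radius less than $1$: in fact, when $\ker\Gamma=\{0\}$ it is unitarily equivalent to the backward shift $S^*$, which has norm and spectral radius equal to $1$. So the sentence ``$T$ is itself compact with spectral radius strictly less than $1$'' is false, and the stability is not automatic in the way you describe. The paper's route is more delicate: one finds a compact intertwiner $K=|R|^{1/2}$ and an auxiliary contraction $A$ with $\Sigma^*K = KA$; one then shows $A$ is a \emph{strict} contraction (via its factorization $A=Q^*J_1QJ$ and an analysis of $Q$), deduces $A^n\to 0$ weakly using the Sz.-Nagy--Foias model, and finally uses compactness of $K$ to upgrade to strong stability of $\Sigma^*$. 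The compactness of $H$ enters only through $K$, not through $\Sigma^*$ itself.

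Second, the kernel condition. You write that $\ker\Gamma=\{0\}$ amounts to ``$H$ injective and $v$ $*$-cyclic for $H$.'' Both of these hold \emph{automatically} under the strict interlacing hypothesis: $H$ has eigenvalues $\lambda_n^2>0$, and cyclicity of $v$ is forced because otherwise some $\lambda_k^2$ would also be an eigenvalue of $H-vv^*$, violating $|\lambda_k|\ne|\mu_j|$. So your criterion is vacuous. The correct characterization (Proposition~\ref{uniqueHankel} in the paper) is $\|H^{-1/2}v\|=1$ together with $v\notin\Ran H$; these translate, via the product formula $1-F(z)=\prod_k(z-\mu_k^2)/(z-\lambda_k^2)$ evaluated near $z=0$, into \eqref{trivial kernel condition 1} and \eqref{trivial kernel condition 2} respectively. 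Your idea of ``running the same analysis on $(\Gamma_1^2,\Gamma_1 e_0)$'' for the second condition does not obviously lead there; the paper instead analyzes $\sum_k a_k/\lambda_k^4$ directly using a Harnack inequality argument.
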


The proof presented in \cite{Ger-Grill_ISP_2014} was based on a deep analysis of Hankel operators. 
 
In this paper we present a simple proof consisting of three essentially separate parts. The first 
one is a simple operator-theoretic statement (which was essentially proved in \cite{MPT-Acta}); it 
is presented in Section \ref{s:plan}  below. The second part, which was  the hardest part in 
\cite{MPT-Acta}, is proving the asymptotic stability of some operator. However  in our case of 
compact operator it can be obtained essentially for free. This part is discussed in Section 
\ref{asymptotic stability}. 

 The third part is an abstract version of the Borg's two spectra theorem in \cite{Borg_Acta_1946}, 
 which 
is just  an exercise in graduate complex analysis. 

\begin{rem}
The above theorem holds for the finite rank case as well. In this case we have finite sequences 
$(\lambda_n)_{n=1}^N$ and $(\mu_n)_{n=1}^N$ satisfying the intertwining relations 
\eqref{intertwine 01}, $\lambda\ci{N} \ne 0$  (but $\mu\ci{N}$ can be $0$). 

Then there exists a unique finite rank Hankel operator $\Gamma$ such that non-zero eigenvalues of 
$\Gamma$ are simple and coincide with $(\lambda_n)_{n=1}^N$, and the non-zero eigenvalues of 
$\Gamma S$ are also simple and coincide with non-zero members of $(\mu_n)_{n=1}^N$. 

The kernel of $\Gamma$ is always non-trivial, which agrees with Theorem \ref{t:main 02}, because 
for finite sequences conditions \eqref{trivial kernel condition 1}, \eqref{trivial kernel condition 
2} always fail. 

The proof of the finite rank case is simpler, since the asymptotic stability and the abstract 
Borg's theorem are trivial in this case. 
\end{rem}

\section{An abstract inverse problem theorem}
\label{s:plan}
\subsection{Plan of the game}

Let $\Gamma$ be a self-adjoint Hankel operator. Define $\Gamma_1= \Gamma S = S^*\Gamma$ (which is 
also a self-adjoint Hankel operator). Let $(e_n)_{n=0}^\infty$ be the standard basis in $\ell^2$. 
Using the fact that $SS^* = I-(\fdot, e_0)e_0 = I- e_0 e_0^*$  we can write  
\begin{align}
\label{e: Gamma^2 - uu*}
\Gamma_1^2 = \Gamma S^* S \Gamma = \Gamma (I- e_0 e_0^*) \Gamma = \Gamma^2 - uu^*, 
\end{align}
where $u:=\Gamma e_0$.

Now let us try to go the opposite direction. Suppose that we are given two self-adjoint operators 
$R$, $R_1$ on a Hilbert space $H$,   and we want to find a Hankel operator $\Gamma$, such that the 
Hankel operators $\Gamma$ and $\Gamma_1= \Gamma S = S^*\Gamma$ are unitarily equivalent to $R$ and 
$R_1$ respectively.   We can see from \eqref{e: Gamma^2 - uu*}  that $R$ and $R_1$ should satisfy 
the relations 
\begin{align}
\label{R^2 01}
    R^2-R_{1}^{2}=pp^*, 
\end{align}
for some  $p \in H$.

To solve the inverse problem we want to find an operator $\Sigma^*$, (hopefully unitarily 
equivalent to the backward shift $S^*$)  such that $R_1=\Sigma^* R$.
The tool to find $\Sigma^*$ is the following simple lemma.

\begin{lm}[Douglas Lemma]
\label{l:Douglas}
Let $A$ and $B$ be bounded operators in a Hilbert space $\cH$ such that 
\[
\|Bh\|\le \|Ah\| \qquad \forall h\in\cH, 
\]
or, equivalently, $B^*B\le A^*A$. 

Then there exists a contraction $T$ (i.e.~$\|T\|\le 1$) such that $B=TA$. 

Moreover, if $A$ has dense range, the operator $T$ is unique. 
\end{lm}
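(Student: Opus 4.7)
The plan is the standard operator-theoretic construction: first define $T$ on the range of $A$ by the formula $T(Ah)=Bh$, then extend by continuity and by zero.

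\textbf{Step 1 (equivalence of hypotheses).} I would first note that $\|Bh\|^2=(B^*Bh,h)$ and $\|Ah\|^2=(A^*Ah,h)$, so the pointwise norm inequality is the same as the operator inequality $B^*B\le A^*A$ between positive operators. This justifies using either formulation.

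\textbf{Step 2 (defining $T$ on $\ran A$).} I would define $T\colon\ran A\to\cH$ by $T(Ah):=Bh$. The key point is well-definedness: if $Ah_1=Ah_2$ then $A(h_1-h_2)=0$, so by hypothesis $\|B(h_1-h_2)\|\le\|A(h_1-h_2)\|=0$, hence $Bh_1=Bh_2$. The same estimate shows
\[
\|T(Ah)\|=\|Bh\|\le\|Ah\|,
\]
so $T$ is a contraction on the linear subspace $\ran A$.

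\textbf{Step 3 (extending $T$).} Since $T$ is uniformly continuous on $\ran A$, it has a unique continuous extension to $\clos\ran A$, still a contraction. I would then extend $T$ to all of $\cH$ by setting $T\equiv 0$ on $(\clos\ran A)^\perp$. The resulting operator is bounded with norm at most $1$, and by construction $TA=B$.

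\textbf{Step 4 (uniqueness).} If $A$ has dense range, any two contractions $T_1,T_2$ satisfying $T_iA=B$ must agree on $\ran A$, which is dense, so by continuity they agree everywhere.

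No step should be a real obstacle; the only mild subtlety is the well-definedness check in Step 2, which is exactly where the hypothesis $\|Bh\|\le\|Ah\|$ gets used. Everything else is routine extension by continuity.
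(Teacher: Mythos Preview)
Your proof is correct and is the standard argument for the Douglas Lemma. The paper does not actually prove this lemma; it is stated as a known result, with the subsequent Remark simply recording the formula $T=BA^{-1}$ on $\ran A$ (when $\ker A=\{0\}$), which is exactly your Step~2 construction.
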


\begin{rem}
\label{r: Douglas} If $\ker A=\{0\}$ and $A$ has dense range, then the (unbounded) operator 
$A^{-1}$ is defined on the dense set $\ran A$, and the operator $T$ is given on this dense set by 
$T=BA^{-1}$. The adjoint $T^*$ is given by $T^*=(A^{-1})^* B^*$; it is not hard to show that $\ran 
B^*$ is in the domain of $(A^{-1})^*$, so it is well defined on all space.  
\end{rem}

To avoid non-uniqueness in finding $\Sigma^*$ it is convenient to deal only with the \emph{core} of 
the operator $\Gamma$, i.e.~with the operator $\Gamma\utc:=\Gamma |_{(\ker\Gamma)^\perp}$.  So, 
given self-adjoint operators $R$ and $R_1$ satisfying \eqref{R^2 01}, 
$\ker R=\{0\}$, we want to find Hankel operator $\Gamma$ such that the pair $R$, $R_1$ is unitarily 
equivalent to the pair $\Gamma\utc=\Gamma |_{(\ker\Gamma)^\perp}$, $\Gamma_1\utc=S^*\Gamma 
|_{(\ker\Gamma)^\perp}$. In this case, by the above Lemma \ref{l:Douglas} there exists a unique 
contraction   $\Sigma^*$ such that $R_1=\Sigma^* R$ (note that $\Sigma^*=R_1R^{-1}$, see Remark 
\ref{r: Douglas}). Clearly, if the pair $R$, $R_1$ is unitarily equivalent to the pair 
$\Gamma\utc$, $\Gamma_1\utc$, then $\Sigma^*$ 
should be  unitarily equivalent to $S^*$ restricted to $(\ker \Gamma)^{\perp}$, meaning that 
\[
\Sigma^* = V^*S^*|_{(\Ker \Gamma)^\perp}V.
\]
for some unitary operator $V$. 

In this situation the vector $p$ from \eqref{R^2 01} should be $p=Rq$, where $\|q\|\le 1$; think of 
$q$  as of  being
\begin{align*}
    q = V^*P_{(\ker\Gamma)^\perp} e_0 .
\end{align*}

Using the fact that $R_1=\Sigma^*R = R\Sigma$ we can rewrite \eqref{R^2 01} as 
\begin{align*}
R(I- \Sigma\Sigma^*)R = R qq^* R
\end{align*}
which implies that 
\begin{align}
\label{defect T}
    I -\Sigma\Sigma^* &= q q^* = (\cdot , q) q
\end{align}

So,  we arrive at the following setup: $R$ and $R_1$ are self-adjoint operators, $\ker R=\{0\}$, 
\begin{align}
\label{e: R^2 - R_1^2}
    R^2-R_{1}^{2}=pp^*, \qquad p=Rq, \quad \|q\|\le 1, 
\end{align}
 and $\Sigma^*$ is the unique contraction satisfying $R_1 = \Sigma^* R$; note that in this case 
 $\Sigma^*$ satisfies \eqref{defect T}. 

\begin{rem}
\label{r: Sigma*}
The operator $\Sigma^*$ is defined on a dense set as $\Sigma^*= R_1R^{-1}$, see Remark \ref{r: 
Douglas}
\end{rem}
\begin{defin}
We say an operator $T$ is asymptotically stable iff $T^n\to 0$  in the strong operator topology as 
$n\to \infty$, i.e. iff for for all $ x\in \cH$
\begin{align*}
\lim\limits_{n \mapsto \infty}\|T^{n}x\| \rightarrow 0 .
\end{align*}
\end{defin}

\begin{prop}
\label{uniqueHankel}
If the operator $\Sigma^*$ introduced above is asymptotically stable, then there exist a unique 
Hankel 
operator $\Gamma$ such that the pair $\Gamma |_{(\Ker \Gamma)^\perp}$, $\Gamma S|_{(\Ker 
\Gamma)^\perp}$ is unitary equivalent to $R$, $R_1$, i.e.~that there exists  a unitary operator $V 
: \cH \to (\Ker\Gamma)^\perp$ such that
\begin{align}
\label{Gamma equiv R}
\Gamma |_{(\Ker \Gamma)^\perp} & = VRV^*,  \\
\label{Gamma_1 equiv R_1}
\Gamma S|_{(\Ker \Gamma)^\perp} &= VR_{1}V^* . 
\intertext{Moreover, multiplying $V$ by a unimodular constant one can always get that}  
\label{e:Vq}
P\ci{(\ker\Gamma)^\perp} e_0  & = V q ,  \\
\label{e:Vp}
\Gamma e_0 & = Vp .
\end{align}
Finally, $\ker \Gamma =\{0\}$ if and only if  $\|q\| =1$ and $q \notin \Ran R$.
\end{prop}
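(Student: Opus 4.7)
My plan is to construct an isometric embedding $V : \cH \to \ell^2$ intertwining $\Sigma^*$ with the backward shift $S^*$, then set $\Gamma := VRV^*$ and verify everything. Motivated by $I - \Sigma\Sigma^* = qq^*$, I would define $(Vx)_n := (\Sigma^{*n}x, q)$ for $n \ge 0$. The telescoping identity
\[
\|\Sigma^{*n}x\|^2 - \|\Sigma^{*n+1}x\|^2 = ((I - \Sigma\Sigma^*)\Sigma^{*n}x, \Sigma^{*n}x) = |(\Sigma^{*n}x, q)|^2,
\]
summed from $n=0$ to $\infty$ with the tail discarded by asymptotic stability, yields $\|Vx\|^2 = \|x\|^2$, so $V$ is isometric. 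Direct computation gives $V\Sigma^* = S^*V$ (equivalently $\Sigma V^* = V^*S$) and $V^*e_0 = q$.

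Setting $\Gamma := VRV^*$, self-adjointness is immediate, and the Hankel relation $\Gamma S = S^*\Gamma$ drops out of $R\Sigma = \Sigma^*R = R_1$ combined with the intertwining ($\Gamma S = VR\Sigma V^* = V\Sigma^*RV^* = S^*\Gamma$, both sides equal to $VR_1V^*$). Since $\ker R = \{0\}$ and $V$ is isometric, $\ker \Gamma = (\ran V)^\perp$, so $V$ is a unitary from $\cH$ onto $(\ker\Gamma)^\perp$ realizing \eqref{Gamma equiv R} and \eqref{Gamma_1 equiv R_1}. Then $P\ci{(\ker\Gamma)^\perp} e_0 = VV^* e_0 = Vq$ and $\Gamma e_0 = VRV^* e_0 = VRq = Vp$, giving \eqref{e:Vq} and \eqref{e:Vp}.

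For uniqueness, if $\wt\Gamma$ is any other Hankel operator meeting the hypotheses, its kernel is $S$-invariant (from the Hankel identity), so $(\ker\wt\Gamma)^\perp$ is $S^*$-invariant; the given unitary equivalences together with the uniqueness in Lemma~\ref{l:Douglas} force the corresponding $\wt V$ to satisfy $\wt V\Sigma^* = S^*\wt V$. After adjusting $\wt V$ by a unimodular scalar so that $\wt V^*e_0 = q$, the recursion $\wt V^*e_n = \wt V^*S^n e_0 = \Sigma^n q = V^*e_n$ forces $\wt V = V$ and hence $\wt\Gamma = \Gamma$.

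Finally, $\ker\Gamma = \{0\}$ iff $V$ is surjective, iff $\Sigma$ is unitarily equivalent to $S$ on $\ell^2$. In one direction, $V$ unitary forces $qq^*$ to match $I - SS^* = e_0 e_0^*$, so $\|q\|=1$, and makes $\ker\Sigma \cong \ker S$ trivial; combining $\ker\Sigma = \ker R_1$ (from $R\Sigma = R_1$ and $\ker R = \{0\}$) with $\Sigma^* q = 0$ (a direct consequence of $\|q\|=1$, since $\|\Sigma^*q\|^2 = ((I-qq^*)q,q) = 0$), any $h$ with $Rh = q$ would give $R_1 h = \Sigma^* Rh = 0$ with $h\ne 0$, contradicting $\ker R_1 = \{0\}$; hence $q \notin \ran R$. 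In the other direction, $\|q\|=1$ makes $\Sigma$ a partial isometry ($\Sigma\Sigma^* = P_{q^\perp}$), and reversing the same argument shows $\ker R_1 \ne \{0\}$ would force $q \in \ran R$; thus $q \notin \ran R$ gives $\ker\Sigma = \{0\}$, making $\Sigma$ an isometry with one-dimensional cokernel. Asymptotic stability of $\Sigma^*$ then precludes any unitary summand in the Wold decomposition of $\Sigma$, so $\Sigma \cong S$ on $\ell^2$ and $V$ is unitary. This characterization of surjectivity is the subtlest step; once the intertwiner $V$ is on the table, the earlier parts are essentially bookkeeping.
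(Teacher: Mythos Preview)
Your proposal is correct and follows essentially the same route as the paper: the isometry $V$ built from the telescoping identity, the intertwining $V\Sigma^* = S^*V$, the definition $\Gamma = VRV^*$, and the kernel characterization via showing $\Sigma$ is an isometry with one-dimensional defect and no unitary part are all the paper's own steps (your ``$\Sigma$ is a partial isometry'' is exactly the paper's computation that $\Sigma^*\Sigma$ is a projection). The one place where your write-up is thinner is uniqueness: you assert ``after adjusting $\wt V$ by a unimodular scalar so that $\wt V^* e_0 = q$'' without saying why $\wt V^* e_0$ must be a unimodular multiple of $q$; the paper supplies this by comparing the two rank-one expressions $R^2 - R_1^2 = pp^* = \wt p\,\wt p^{\,*}$ with $\wt p = R\wt V^* e_0$, and once that is in place your recursion $\wt V^* e_n = \Sigma^n q$ is equivalent to the paper's direct computation of the Hankel coefficients $\gamma_k = ((\Sigma^*)^k p, q)$.
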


\begin{rem}
\label{r:uniq V}
The unitary operator $V$ is clearly not unique, since replacing $V$ by $\alpha V$, $
\lvert \alpha \rvert = 1$ does not change \eqref{Gamma equiv R} and \eqref{Gamma_1 equiv R_1}. 
However, the multiplication by a unimodular constant is the only degree of freedom for $V$; we will 
see in the proof of Proposition \ref{uniqueHankel} that a unitary operator $V$ satisfying 
\eqref{Gamma equiv R},  \eqref{Gamma_1 equiv R_1} and one of the identities \eqref{e:Vq}, 
\eqref{e:Vp} is unique. 
\end{rem}
\begin{proof}[Proof of Proposition \ref{uniqueHankel}]
Treating \eqref{defect T} as an identity for quadratic forms and substituting $x\in \cH$ into it we get 
\[
\|x\|^2 - \|\Sigma^*x\|^2 = |(x,q)|^2. 
\]
Applying this identity to $(\Sigma^*)^kx$ we get 
\[
\|(\Sigma^*)^k x\|^2 - \|(\Sigma^*)^{k+1}x\|^2 = |((\Sigma^*)^k x,q)|^2, 
\]
so taking the sum we get
\[
\|x\|^2 - \|(\Sigma^*)^{n+1}x\|^2 = \sum_{k=0}^n |((\Sigma^*)^k x,q)|^2 . 
\]
Taking the limit as $n\to\infty$ and using the asymptotic stability of $\Sigma^*$ we see that
\begin{align*}
\|x\|^2 = \sum_{k=0}^\infty |((\Sigma^*)^k x,q)|^2, 
\end{align*}
which means that the operator $\cV:\cH\to \ell^2$, 
\begin{align}
\label{e: def V}
\cV x :=  \left( (x,q), (\Sigma^*x, q), ((\Sigma^*)^2x, q), \ldots   \right)  =\left( ((\Sigma^*)^k 
x, q) \right)_{k=0}^\infty
\end{align}
is an isometry. 

We can see that 
\[
\cV \Sigma^*x = \left(  (\Sigma^*x, q), ((\Sigma^*)^2x, q), ((\Sigma^*)^3 x, q),  \ldots   \right) 
= S^* V x, 
\]
i.e.~$\cV\Sigma^* =S^*\cV$,  
so $\Sigma^*$ is unitarily equivalent to either $S^*$ (if $\ran \cV = \ell^2$) or to the 
restriction of $S^*$ to $S^*$-invariant subspace  $\ran \cV\subset\ell^2$ (if $\ran \cV\ne\ell^2$). 

Denoting by $\Sigma$ the adjoint of $\Sigma^*$ and taking the adjoint of $\cV\Sigma^* =S^*\cV$ we 
get  
$\Sigma\cV=\cV S$. 

Define $\Gamma:= \cV R \cV^*$.  Then 
\begin{align}
\label{e: Gamma S}
\Gamma S = \cV R \cV^* S =  \cV R \Sigma \cV = \cV \Sigma^* R \cV  = S^* \cV R\cV^* = S^*\Gamma;
\end{align}
(here in the second equality we used $\cV\Sigma^* =S^*\cV$, and in the forth one $\cV\Sigma^* 
=S^*\cV$), so $\Gamma$ is a Hankel operator. 

We can also see from \eqref{e: Gamma S} that $\Gamma S = \cV\Sigma^* R\cV^*= \cV R_1 \cV^*$. 

Let $V$ be the operator $\cV$ with the target space restricted to $\Ran \cV$, so $V : \cH\to \Ran 
\cV$ is a unitary operator. Since $\Ker R=\{0\}$, the identity $\Gamma = \cV R \cV^*$ implies that 
$\ker \Gamma=\{0\}$, so identities $\Gamma = \cV R \cV^*$, $\Gamma_1 = \cV R_1 \cV^*$ translate to 
\eqref{Gamma equiv R}, \eqref{Gamma_1 equiv R_1}.  

Let us now discuss  identities \eqref{e:Vq}, \eqref{e:Vp}. We can see from the definition of $\cV$ 
that  
\begin{align*}
\left( x, \cV^* e_0 \right)\ci\cH = (\cV x, e_0)\ci{\ell^2} = (x,q)\ci\cH, 
\end{align*}
so $\cV^*e_0 =q$, which is equivalent to \eqref{e:Vq}. Next, 
\begin{align*}
u=\Gamma e_0 = \cV R\cV^* e_0 = \cV R q= \cV p, 
\end{align*}
which is exactly \eqref{e:Vp}. Thus for the operator $V$ constructed above, identities 
\eqref{e:Vq}, \eqref{e:Vp} are satisfied. 

Let us  discuss uniqueness. Suppose the identities 
\eqref{Gamma equiv R}, \eqref{Gamma_1 equiv R_1} hold for some unitary operator $V: \cH \to 
\clos 
\Ran \Gamma = (\Ker \Gamma)^\perp \subset \ell^2$, not necessarily the one constructed above. 

Since for a Hankel operator $\Ker \Gamma$ is always $S$-invariant, the subspace 
$(\Ker\Gamma)^\perp$ is $S^*$-invariant, so the restriction $S^*|_{(\Ker \Gamma)^\perp}$ is well 
defined. The identities \eqref{Gamma equiv R}, \eqref{Gamma_1 equiv R_1} with $V=\wt V$ and the 
definition of $\Sigma^*$ then imply that 
\begin{align}
\label{S* equiv T}
  S^*|_{(\Ker \Gamma)^\perp}  =  V \Sigma^*  V^* . 
\end{align}

As it was discussed in Section \ref{s:plan}, see \eqref{e: Gamma^2 - uu*}, 
\begin{align*}
(\Gamma S)^2 = \Gamma^2 - uu^*, 
\end{align*}
where 
\begin{align*}
u:= \Gamma e_0 = \Gamma P_{(\Ker\Gamma)^\perp}e_0. 
\end{align*}
Then identities \eqref{Gamma equiv R}, \eqref{Gamma_1 equiv R_1} imply that 
\[
R_1^2 = R^2 - \wt p \wt p^*, \qquad \text{where}\quad  \wt p= R \wt q, \quad \wt q=  V^* 
P_{(\Ker\Gamma)^\perp}e_0 . 
\]
Comparing this with $R_1^2 = R^2 -  p  p^*$ we conclude that $\wt p=\alpha  p$, $\wt q= \alpha  q$, 
$|\alpha| =1$, so multiplying $V$ by $\alpha$ we get the identities \eqref{e:Vq},  
\eqref{e:Vp} (without spoiling \eqref{Gamma equiv R}, \eqref{Gamma_1 equiv R_1}). 

Computing the coefficients $\gamma_k$ of the operator $\Gamma$ we write 
\begin{align*}
\gamma_k = (\Gamma e_0, S^k e_0) & = \left((S^*)^k \Gamma e_0, e_0\right)  = 
\left((S^*\big|_{(\Ker\Gamma)^\perp})^k \Gamma P_{(\Ker\Gamma)^\perp} e_0, P_{(\Ker\Gamma)^\perp} 
e_0\right)
\\ & = \left( (\Sigma^*)^k R \tilde q, \tilde q  \right) = \left( (\Sigma^*)^k \wt p, \wt q 
\right) = 
\left( (\Sigma^*)^k  p,  q \right), 
\end{align*}
meaning that the coefficients $\alpha_k$ do not depend on $V$. So, uniqueness of the Hankel 
operator $\Gamma$ is proved. 

Finally, let us discuss the kernel of $\Gamma$. As we discussed above, $\Ker \Gamma$ is trivial if 
and only if  the operator $V$ defined by \eqref{e: def V} satisfies $\Ran V =\ell^2$. In this case 
$\Sigma^*$ is unitarily  equivalent to $S^*$, or, equivalently, $\Sigma$ is unitarily equivalent to 
$S$.  

So, let $\Ker\Gamma=\{0\}$, so  $\Sigma^*$ is unitarily equivalent to $S^*$. We know that 
\[
I- SS^* = e_0e_0^*, \qquad I - \Sigma \Sigma^* = qq^*, 
\]
so by the unitary equivalence $\|q\|=\|e_0\|=1$, and $\Sigma^*q=0$. If $q\in\Ran R$, i.e.~$q=Rf$, 
then 
\begin{align*}
R\Sigma f = \Sigma^* Rf = \Sigma^* q =0, 
\end{align*}
and since $\Ker R =\{0\}$ , we conclude that $\Sigma^*f=0$. But if $\Ker\Gamma =\{0\}$, then 
$\Sigma$ is 
an isometry, which contradicts $\Sigma f=0$. 
So $q\notin \Ran R$. 

Let now $\|q\|=1$ and $q\notin \Ran R$. We know that 
\begin{align*}
\Sigma^* R^2 \Sigma = R_1^2 = R \Sigma\Sigma^* R = R(I-qq^*)R , 
\end{align*}
and that $\Ker (I-qq^*) =\spn\{q\}$. Since $q\notin \Ran R$, we see that $\Ker R(I-qq^*)R =\{0\}$, 
so $\Ker \Sigma =\{0\}$.  

Applying \eqref{defect T} to vector $q$ we get that 
\begin{align*}
q - \Sigma\Sigma^*q = q, 
\end{align*}
and since $\Ker \Sigma=\{0\}$ we see that $\Sigma^*q=0$. 

Left and right multiplying \eqref{defect T} by $\Sigma^*$ and $\Sigma$ respectively, we get that 
\begin{align*}
\Sigma^*\Sigma  -\Sigma^*\Sigma\Sigma^*\Sigma = \Sigma^*qq^*\Sigma , 
\end{align*}
and since $\Sigma^*q = 0$,  we have $\Sigma\Sigma^* = (\Sigma\Sigma^*)^2$, which means 
$\Sigma\Sigma^*$ is an orthogonal projection. 

Since $\Ker \Sigma =\{0\}$, we conclude that $\Sigma^*\Sigma=I$, i.e.~that $\Sigma$ is an isometry. 
Since $\Sigma^*$ is asymptotically stable, $\Sigma^*$ (and so $\Sigma$) has no unitary part. The 
identity \eqref{defect T} implies that $\rank (I-\Sigma\Sigma^*)=1$, so $\Sigma$ is unitarily 
equivalent to the 
shift operator $S$, i.e.~that 
\[
S = V \Sigma V^*
\]
for some unitary operator $V:\cH\to \ell^2$.  Defining $\Gamma= VRV^*$, $\Gamma_1 = VR_1V^*$, we 
can see that 
\[
\Gamma_1 = \Gamma S = S^*\Gamma, 
\]
so $\Gamma$ is indeed the Hankel operator with trivial kernel, satisfying the conditions 
\eqref{Gamma equiv R}, \eqref{Gamma_1 equiv R_1}  (we already proved that such Hankel operator is 
unique). 
\end{proof}

%

\section{Asymptotic stability}
\label{asymptotic stability}


The hard part of solving the inverse problem for Hankel operator is usually the proof of asymptotic 
stability of operator $T$. However, under the compact operator case we will get asymptotic 
stability for free. 

Let us recall the setup. We had compact self-adjoint operators $R$ and $R_1$, $\ker R=\{0\}$,  
satisfying $R_1^2 = R^2-pp^*$, where $\|R^{-1}p\|\le 1$,  with $\{\lambda_k\}_{k\ge 1}$ and 
$\{\mu_k\}_{k\ge 1}$ being the non-zero eigenvalues of $R$ and $R_1$ respectively. We also assume 
that the eigenvalues satisfy the intertwining relations \eqref{intertwine 01}.

\begin{prop}
\label{p:asy stability}
Under the above assumptions the vector $p$ is cyclic for $R^2$ and the operator $\Sigma^*$ is 
asymptotically stable. 
\end{prop}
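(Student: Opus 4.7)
I will handle the two claims in turn. For the cyclicity of $p$ for $R^2$: since $R$ is compact self-adjoint and injective, and the strict interlacing $|\lambda_k|>|\lambda_{k+1}|$ makes the eigenvalues $\lambda_k^2$ of $R^2$ distinct, the eigenvectors $\{\phi_k\}$ form an ONB of $\cH$. Cyclicity of $p$ then amounts to $(p,\phi_k)\ne 0$ for every $k$; if $(p,\phi_{k_0})=0$ one would have $R_1^2\phi_{k_0}=(R^2-pp^*)\phi_{k_0}=\lambda_{k_0}^2\phi_{k_0}$, placing $\lambda_{k_0}^2$ in the spectrum of $R_1^2$ and contradicting the strict interlacing.

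The asymptotic stability of $\Sigma^*$ is based on the identity $\Sigma^{*n}R=R\Sigma^n$, proved by induction from $\Sigma^*R=R\Sigma=R_1$. Since $\Sigma^*$ is a contraction and $\Ran R$ is dense, it suffices to show $\Sigma^{*n}(Rz)=R\Sigma^n z\to 0$ for every $z$. Boundedness of $\{\Sigma^n z\}$ together with compactness of $R$ places $\{R\Sigma^n z\}$ in a relatively compact set, and $\|R\Sigma^n z\|=\|\Sigma^{*n}Rz\|$ is nonincreasing with limit $L\ge 0$. Assume for contradiction that $L>0$ and extract $\Sigma^{n_j}z\rightharpoonup y$; compactness of $R$ gives $R\Sigma^{n_j}z\to Ry$ in norm, so $\|Ry\|=L$ and $y\ne 0$. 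Shifting the subsequence makes every $\Sigma^m y$ a weak subsequential limit, so $\|R\Sigma^m y\|=L$ for all $m\ge 0$. Using $R^2=R_1^2+pp^*$ and $R_1\Sigma^m=R\Sigma^{m+1}$:
\[
L^2=\|R\Sigma^m y\|^2=\|R_1\Sigma^m y\|^2+|(\Sigma^m y,p)|^2=L^2+|(\Sigma^m y,p)|^2,
\]
forcing $(\Sigma^m y,p)=0$ for every $m\ge 0$; equivalently $y\in\cM^\perp$, where $\cM:=\overline{\mathrm{span}}\{\Sigma^{*m}p:m\ge 0\}$.

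To close, I need $\cM=\cH$, which I expect to be the main obstacle. My intended route: the same computation applied to any $v\in\cM^\perp$ yields $\|R\Sigma^m v\|=\|Rv\|$ for every $m$, so $\Sigma^*$ acts isometrically on the relatively compact orbit $\{R\Sigma^m v\}\subset\cK^\perp$; a Schauder (or mean-ergodic) argument on its compact convex $\Sigma^*$-invariant hull $R\bigl(\overline{\mathrm{conv}}\{\Sigma^m v:m\ge 0\}\bigr)$ produces a $\Sigma^*$-fixed point, which by injectivity of $R$ comes from a $\Sigma$-fixed vector $v_0\in\cM^\perp$. Iterating the intertwining $\Sigma^{*k}R=R\Sigma^k$ together with the defect identity $\|\Sigma^*x\|^2=\|x\|^2-|(x,q)|^2$ shows that each $R^k v_0$ is again fixed by $\Sigma^*$ and orthogonal to $q$, hence $v_0\perp R^k q$ for every $k\ge 0$. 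By Part~1 the vector $q=R^{-1}p$ is cyclic for $R$ as well (same nonzero projections as $p$ in the eigenbasis of $R$, and $R$ has simple spectrum), so $v_0=0$ and $\cM=\cH$. The delicate point will be ensuring the fixed point produced by Schauder is nonzero; the combination of compactness of $R$, the constancy of $\|R\Sigma^m v\|$, and the isometric action of $\Sigma^*$ on $\cK^\perp$ should rule out the trivial fixed point.
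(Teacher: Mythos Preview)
Your cyclicity argument is correct and matches the paper's.

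For asymptotic stability your route is genuinely different from the paper's. The paper never works with the orbit $\{R\Sigma^m z\}$ directly; instead it constructs an auxiliary contraction $A$ via the intertwining $\Sigma^*|R|^{1/2}=|R|^{1/2}A$ (using the L\"owner--Heinz inequality), proves from the cyclicity of $p$ that $A$ is a \emph{strict} contraction (through the factorization $A=Q^*J_1QJ$ with $Q$ strict), and then invokes the Sz.-Nagy--Foias functional model to deduce that any strict contraction satisfies $A^n\to 0$ weakly; a short transfer lemma ($TK=KA$, $K$ compact with dense range) then yields strong stability of $\Sigma^*$. Your argument up to producing a nonzero $y$ with $\|R\Sigma^m y\|\equiv L$ and $(\Sigma^m y,p)=0$ for all $m$ is clean and correct, and more elementary in spirit since it avoids both operator monotonicity and dilation theory.

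The gap is the Schauder step. Nothing you have said excludes $0$ from $C=R\bigl(\overline{\mathrm{conv}}\{\Sigma^m v:m\ge 0\}\bigr)$, and once $0\in C$ it is a fixed point of the linear map $\Sigma^*$ and Schauder gives you nothing. A concrete obstruction: if it happened that $\Sigma y=-y$, then $R\Sigma^m y=(-1)^m Ry$, the closed convex hull is the segment $[-Ry,Ry]$, and $0$ is the \emph{only} $\Sigma^*$-fixed point in $C$. So ``compactness of $R$, constancy of $\|R\Sigma^m v\|$, and isometric action'' do not rule out the trivial fixed point. What \emph{would} close your argument is producing a nonzero eigenvector of $\Sigma$ with unimodular eigenvalue: your iteration then goes through verbatim (a unimodular $\Sigma$-eigenvector of a contraction is automatically a $\Sigma^*$-eigenvector via $\Sigma^*\Sigma v_0=v_0$, whence $(v_0,q)=0$; then $\Sigma^*Rv_0=R\Sigma v_0$ and $\Sigma\Sigma^*Rv_0=Rv_0$ make $Rv_0$ again a unimodular eigenvector, and cyclicity of $q$ for $R$ forces $v_0=0$). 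Such an eigenvector does exist here, because the relatively compact orbit $\{R\Sigma^m y\}$ forces $\Sigma^*$ to be unitary with pure point spectrum on $\overline{\mathrm{span}}\{R\Sigma^m y\}$ (Jacobs--de Leeuw--Glicksberg), but that is a substantive extra ingredient you have not supplied, and without it the proof is incomplete.
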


The cyclicity of $p$ is easy. Indeed, if $p$ is not cyclic for $R^2$, then projection of $p$ onto 
some eigenspace $\ker (R^2-\lambda_k I)$ is zero, so operators $R^2$ and $R_1^2$ coincide on their 
common eigenspace $\ker (R^2-\lambda_k I)$, which means that $|\lambda_k|=|\mu_k|$. 

\subsection{Preparation}
\label{preparation work}
To prove the asymptotic stability of the  contraction $\Sigma^*$ we will use  the following simple 
lemma, which is a slight modification of \cite[lemma 3.2]{MPT-Acta}.

 \begin{lm}
 \label{existence of A}
 Let $\|T\|\le 1$, and let $K$ be a compact operator with a dense range. 
 Assume that  an operator $A$ satisfies
 \begin{align}
 \label{TK=KA}
T K = K A. 
 \end{align}

If $A$ is weakly asymptotically stable, meaning that $A^n\to 0$ in the weak operator topology 
(W.O.T) as  $n\to\infty$, then $T$ is asymptotically stable. 
 \end{lm}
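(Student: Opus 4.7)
The plan is straightforward: transfer the asymptotic behavior from $A$ to $T$ through the intertwining relation, using compactness of $K$ to upgrade weak convergence to norm convergence.

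First I would iterate the relation \eqref{TK=KA}: an immediate induction on $n$ gives $T^n K = K A^n$ for all $n\ge 0$. This means that on the dense set $\ran K\subset \cH$, the action of $T^n$ is completely controlled by the action of $A^n$ followed by $K$. Explicitly, for any $h\in\cH$,
\begin{align*}
T^n(Kh) = K A^n h.
\end{align*}

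Next I would exploit the hypothesis that $A$ is weakly asymptotically stable: for every $h\in\cH$, the sequence $(A^n h)_{n\ge 1}$ converges to $0$ in the weak topology of $\cH$. By the uniform boundedness principle, any weakly convergent sequence is bounded, so $(A^n h)_n$ is norm-bounded. Since $K$ is compact, it sends weakly null (bounded) sequences to norm null sequences, hence $\|K A^n h\|\to 0$ as $n\to\infty$. Combined with the displayed identity, this yields $\|T^n(Kh)\|\to 0$ for every $h\in\cH$, i.e.~$T^n x\to 0$ in norm for every $x\in\ran K$.

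Finally, I would use a standard density argument to extend this from $\ran K$ to all of $\cH$. Since $\|T\|\le 1$, we have $\|T^n\|\le 1$ uniformly in $n$. Given $x\in\cH$ and $\e>0$, pick $y\in\ran K$ with $\|x-y\|<\e/2$; then
\begin{align*}
\|T^n x\| \le \|T^n(x-y)\| + \|T^n y\| \le \|x-y\| + \|T^n y\| < \e/2 + \|T^n y\|,
\end{align*}
and the second term tends to $0$ by the previous step. Hence $\limsup_n \|T^n x\|\le \e/2$, and since $\e$ is arbitrary, $T^n x\to 0$, proving asymptotic stability.

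I do not anticipate any genuine obstacle here; the one conceptual point worth highlighting is that compactness of $K$ is precisely what lets us bridge the gap between the weak-operator hypothesis on $A$ and the strong-operator conclusion for $T$. Without compactness (for instance, if $K$ were merely assumed to have dense range) this bridge would fail, and one would need $A^n\to 0$ strongly to begin with.
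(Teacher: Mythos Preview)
Your proof is correct and follows essentially the same route as the paper's: iterate the intertwining relation, use compactness of $K$ to convert the weak convergence $A^n h\rightharpoonup 0$ into norm convergence $\|KA^n h\|\to 0$, and then extend from the dense set $\ran K$ to all of $\cH$ using the uniform bound $\|T^n\|\le 1$. Your explicit mention of the uniform boundedness principle (to ensure $(A^n h)_n$ is bounded before applying compactness) is a detail the paper leaves implicit.
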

 
\begin{proof} 
Iterating \eqref{TK=KA} we get that $T^n K = KA^n$, $n\ge 1$. Take $x\in\cH$. Since $A^n\to0$ in 
W.O.T. and $K$ is compact, we have that $\|KA^n x\|\to 0$. 

So $\lim_{n\to\infty}\|T^n y\| = 0$ for all $y\in\Ran K$. Thus, we have strong convergence on a 
dense set, and since $\|T^n\|\le 1$, we conclude (by $\e/3$-Theorem) that $T^n\to 0$ in the strong 
operator topology. 
\end{proof}

    Recall, that for an operator $R$ (in a Hilbert space) its \emph{modulus} $|R|$ is defined as 
    $|R|:= (R^*R)^{1/2}$
\begin{lm}
\label{contraction A}
For the operators $R$ and $\Sigma^*$ from Section \ref{s:plan} 
there exists a unique contraction $A$, such that:  
 \begin{equation*}
\Sigma^*|R|^{1/2}=|R|^{1/2}A
 \end{equation*}
\end{lm}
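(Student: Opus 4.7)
The plan is to apply the Douglas Lemma~\ref{l:Douglas} in order to reduce the existence and uniqueness of $A$ to a single operator inequality, which we then establish using two classical inequalities for the square root function.

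Taking adjoints, the sought identity $\Sigma^*|R|^{1/2}=|R|^{1/2}A$ is equivalent to $|R|^{1/2}\Sigma=A^*|R|^{1/2}$. The Douglas Lemma, applied to the operators $|R|^{1/2}\Sigma$ and $|R|^{1/2}$ (playing the roles of $B$ and $A$ in that lemma, respectively), then produces a unique contraction $T$ with $|R|^{1/2}\Sigma=T|R|^{1/2}$, and hence a unique contraction $A:=T^*$ with $\Sigma^*|R|^{1/2}=|R|^{1/2}A$, provided we can verify that $\||R|^{1/2}\Sigma h\|\le\||R|^{1/2}h\|$ for every $h\in\cH$, that is the operator inequality
\[
\Sigma^*|R|\Sigma\le|R|.
\]
The uniqueness clause of Douglas applies because $\ker R=\{0\}$ forces $|R|^{1/2}$ to have dense range.

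The main task is therefore the operator inequality above. The starting point is the identity $R_1^2=\Sigma^*R^2\Sigma$ together with $R^2-R_1^2=pp^*\ge0$, which gives $R_1^2\le R^2$. To climb from this $R^2$-level inequality down to the $|R|$-level we combine two classical facts about $\sqrt{\,\cdot\,}$ on $[0,\infty)$. Operator \emph{monotonicity} of the square root (the L\"owner--Heinz inequality) applied to $R_1^2\le R^2$ yields $|R_1|\le|R|$. Operator \emph{concavity} of the square root, in the form of Hansen's inequality $X^*A^{1/2}X\le(X^*AX)^{1/2}$ (valid for any contraction $X$ and any $A\ge 0$), applied with $X=\Sigma$ and $A=R^2$, yields
\[
\Sigma^*|R|\Sigma\le(\Sigma^*R^2\Sigma)^{1/2}=|R_1|.
\]
Concatenating the two, $\Sigma^*|R|\Sigma\le|R_1|\le|R|$, which is exactly what was needed.

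The only genuine obstacle is the inequality $\Sigma^*|R|\Sigma\le|R|$ itself: one is tempted to try to extract it from $\Sigma^*R^2\Sigma\le R^2$ just by ``taking square roots'', but this fails, because $\Sigma$ is merely a contraction and does not commute with $|R|$ in general. Hansen's inequality is the precise tool that exploits the operator \emph{concavity} of $\sqrt{\,\cdot\,}$ to pull a square root through a contraction, routing the argument through the intermediate quantity $|R_1|$; once this key step is in place, the rest is a direct application of Douglas.
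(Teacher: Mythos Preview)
Your proof is correct and reaches the same target inequality $\Sigma^*|R|\Sigma\le|R|$, followed by the same application of the Douglas Lemma (with uniqueness coming from $\ker R=\{0\}$). The route is slightly different, however. The paper avoids invoking Hansen's inequality as a black box: it observes directly that
\[
|R|^2 \;\ge\; \Sigma^*|R|^2\Sigma \;\ge\; \Sigma^*|R|\,\Sigma\Sigma^*\,|R|\Sigma \;=\; (\Sigma^*|R|\Sigma)^2,
\]
the second step being nothing more than $\Sigma\Sigma^*\le I$ sandwiched by $|R|$ and conjugated by $\Sigma$; a single application of L\"owner--Heinz with exponent $1/2$ then gives $|R|\ge\Sigma^*|R|\Sigma$. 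In effect, the paper proves in one line the special case of Hansen's inequality you use, so its argument is marginally more self-contained. Your version, on the other hand, factors the inequality through the intermediate operator $|R_1|$, obtaining the sharper chain $\Sigma^*|R|\Sigma\le|R_1|\le|R|$; this is a pleasant conceptual bonus and foreshadows the role of $|R_1|$ (via the contraction $Q$ with $|R_1|^{1/2}=Q|R|^{1/2}$) in the structural analysis of $A$ that follows.
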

\begin{proof}
The inequality $\Sigma\Sigma^*\le I$ implies that 
\begin{align*}
|R|^2 = R^2 \ge R\Sigma\Sigma^*R = \Sigma^*R^2\Sigma = \Sigma^*|R|^2 \Sigma \ge 
\Sigma^*|R|\Sigma\Sigma^*|R|\Sigma = (\Sigma^*|R|\Sigma)^2. 
\end{align*}  
Recall, that the L\"{o}wner--Heinz inequality states that for self-adjoint operators $W$, $W_1$ the 
inequality $W\ge W_1\ge 0$ imply that $W^\alpha\ge W_1^\alpha$ for all $\alpha\in(0,1)$. Applying 
this inequality with $\alpha=1/2$ to operators $|R|^2$ and $( \Sigma^*|R|\Sigma)^2$, we conclude 
that 
\[
|R|  \ge \Sigma^*|R|\Sigma, 
\]
or, equivalently, 
\[
\| |R|^{1/2}x\| \ge \| |R|^{1/2} \Sigma x\| \qquad \forall x\in\cH .
\]
By Lemma \ref{l:Douglas} there exists a unique contraction, denote it to be $A^*$, satisfies: 
\[
A^*|R|^{1/2} = |R|^{1/2} \Sigma .
\]
Taking the adjoint, we get the conclusion of the lemma. 
\end{proof}

The operator $A$ constructed in the above Lemma \ref{contraction A} satisfies the identity 
\eqref{TK=KA} with $K=|R|^{1/2}$. Since $|R|^{1/2}$ is compact,   Lemma \ref{existence of A} says 
that the weak asymptotic stability of $A$ implies the asymptotic stability of $T$. 

\subsection{Weak asymptotic stability of \texorpdfstring{$A$}{A}} 

We will show below in Section \ref{s: structure A} that under our assumptions the operator $A$ is a 
strict contraction, meaning that $\|Ax\|<\|x\|$ for all $x\ne 0$. 

\begin{lm}
\label{pure contraction to weakly asyptotic stable}
Let $A:H\to H$ be a strict contraction. 
Then $A^n \to 0$ in the weak operator topology (WOT) as $n\to\infty$.
\end{lm}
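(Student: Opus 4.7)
The key structural fact I would exploit is that $\|A^n x\|^2$ is a non-increasing sequence in $n$ (since $A$ is a contraction), together with the elementary identity $\|Ay\|^2 = \|y\|^2 - \|(I - A^*A)^{1/2} y\|^2$. Applying this identity with $y = A^n x$ and summing telescopically yields
\begin{align*}
\sum_{n=0}^\infty \|(I - A^*A)^{1/2} A^n x\|^2 = \|x\|^2 - \lim_{n\to\infty} \|A^n x\|^2 \le \|x\|^2,
\end{align*}
so in particular $(I - A^*A)^{1/2} A^n x \to 0$ in norm for every $x \in H$.

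The second ingredient is that strictness of $A$ is equivalent to $\ker(I - A^*A) = \{0\}$: indeed, $\|Ax\| < \|x\|$ for all $x \ne 0$ says exactly that $((I - A^*A) x, x) > 0$ for $x \ne 0$, which forces the positive self-adjoint operator $(I - A^*A)^{1/2}$ to have trivial kernel. Being self-adjoint, its range is then dense in $H$.

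Combining the two observations, for any $y = (I - A^*A)^{1/2} z$ in this dense range,
\begin{align*}
(A^n x, y) = ((I - A^*A)^{1/2} A^n x, z) \to 0,
\end{align*}
and since $\|A^n\| \le 1$, the standard $\e/3$ density argument extends the conclusion $(A^n x, y) \to 0$ to all $y \in H$. I do not anticipate any real technical obstacle; the proof is essentially an exercise in manipulating the defect operator $I - A^*A$, the point being that strictness of $A$ is precisely what is needed to guarantee density of the range of $(I - A^*A)^{1/2}$. Note that this argument gives the stronger conclusion that $(I - A^*A)^{1/2} A^n x \to 0$ strongly, while $A^n x$ itself need not tend to zero strongly, in agreement with the weaker WOT conclusion claimed in the lemma.
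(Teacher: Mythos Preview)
Your proof is correct and is genuinely different from the paper's argument. The paper invokes the Sz.-Nagy--Foias functional model: a strict contraction is completely non-unitary, hence unitarily equivalent to a model operator $\cM_\theta$ on a model space $\cK_\theta\subset L^2(\T;E)$, which is the compression of the multiplication operator $M_z$; since $M_z$ is a dilation of $\cM_\theta$ and $M_z^n\to 0$ weakly, the same holds for $\cM_\theta^n$ and hence for $A^n$. Your argument bypasses this machinery entirely, working directly with the defect operator $D_A=(I-A^*A)^{1/2}$ via the telescoping identity and the observation that strictness of $A$ is exactly the density of $\Ran D_A$.

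Your route is more elementary and self-contained; indeed, the telescoping trick you use is essentially the same one the paper employs in the proof of Proposition~\ref{uniqueHankel} (there with defect $I-\Sigma\Sigma^*=qq^*$), so it fits naturally with the surrounding arguments and avoids importing dilation theory. The paper's approach, on the other hand, gives a conceptual explanation --- weak convergence of $A^n$ is inherited from that of its unitary dilation --- and would generalize immediately to any completely non-unitary contraction, not only strict ones.
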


\begin{proof}
First we notice that the assumption that $A$ is a strict contraction implies that $A$ is completely 
non unitary, meaning that there is no reducing subspace of $A$ on which $A$ acts unitarily. But 
every completely non-unitary contraction  admits the \emph{functional model}, i.e. it is unitaryly 
equivalent to the model operator $\cM_\theta$ on the model space $\cK_\theta$, where $\theta$ is 
the so-called \emph{characteristic function} of $A$. Without going into details, which are not 
important for our purposes, we just mention that the model space $\cK_\theta$ is a subspace of a 
vector-valued space $L^2(E)=L^2(\T,  m;E)$ of square integrable (with respect to the normalized 
Lebesgue measure $m$ on $\T$) functions with values in an auxiliary Hilbert space $E$. 
The model operator $\cM_\theta$, to which $A$ is unitarily equivalent,  is just the 
\emph{compression} of the multiplication operator$M_z$ by the independent variable $z$
\begin{align*}
\cM_\theta f = P\ci{\cK_\theta} M_z f, \qquad f\in \cK_\theta ;
\end{align*}
recall that  the  multiplication operator $M_z$ is defined by $M_z f(z) = zf(z)$, $z\in\T$. 

What is also essential for our purposes, is that the multiplication operator $M_z$ is the 
\emph{dilation} of the model operator $\cM_\theta$, i.e.~that for all $n\ge 1$
\begin{align*}
\cM_\theta^n f = P\ci{\cK_\theta}M_z^n f, \qquad f\in \cK_\theta. 
\end{align*}

Since trivially $M_z^n \to 0$ in the weak operator topology of $B(L^2(\T, m;E))$ as $n\to+\infty$, 
we conclude that $\cM_\theta^n\to 0$ as $n\to +\infty$ in  the weak operator topology of 
$B(\cK_\theta)$, and so $A^n\to 0$ in the weak operator topology as well. 
\end{proof}

\subsection{\texorpdfstring{$A$}{A} is a strict contraction}
\label{s: structure A}
To prove the weak asymptotic stability of $A$ we need to investigate its structure in more 
detail.

We know that $|R_1|^2 = |R|^2- pp^* \le|R|^2$. By the L\"{o}wner--Heinz inequality with 
$\alpha=1/2$ we have that $|R_1|\le|R|$, so
by Lemma \ref{l:Douglas} there exists a unique contraction $Q$ such that: 
\begin{align}
 |R_{1}|^{1/2} &= Q|R|^{1/2}
 \label{contraction Q}
\end{align}  
Let us write the polar decompositions of $R$ and $R_1$ 
\begin{align*}
  R = J |R|\qquad R_1 = J_1 |R_1| . 
\end{align*}
Since $R$ and $R_1$ are self-adjoint, the unitary operators $J$ and $J_1$ are also self-adjoint and 
commute with $|R|$ and $|R_1|$ respectively.

 The following simple Lemma (see  \cite[Lemma 3.5]{MPT-Acta}) gives the structure of the operator 
 $A$.   
\begin{lm}
\label{structure of A}
\begin{equation*}
  A=Q^* J_1 QJ
\end{equation*}  
\end{lm}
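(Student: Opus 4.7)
My plan is to apply the uniqueness part of Lemma \ref{contraction A}. Since $\ker R=\{0\}$, the operator $|R|^{1/2}$ has dense range, so by the Douglas Lemma the contraction $A$ characterized by $\Sigma^*|R|^{1/2} = |R|^{1/2}A$ is unique. Consequently, to prove $A = Q^*J_1QJ$ it suffices to check two things: first, that $Q^*J_1QJ$ is a contraction, and second, that it satisfies the intertwining relation $\Sigma^*|R|^{1/2} = |R|^{1/2}\, Q^*J_1QJ$.

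The first point is essentially free: $\|Q\|\le 1$ by \eqref{contraction Q}, $J$ is a self-adjoint unitary (because $\ker R = \{0\}$), and $J_1$ is a self-adjoint partial isometry; hence $\|Q^*J_1QJ\|\le 1$.

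For the intertwining identity, both sides are bounded operators and $\Ran |R|^{1/2}$ is dense, so it suffices to verify the identity after right-multiplying by $|R|^{1/2}$, that is, to show
\[
|R|^{1/2} Q^* J_1 Q J\, |R|^{1/2} \;=\; \Sigma^*|R|.
\]
For the left-hand side I would use three ingredients: the adjoint form $|R|^{1/2}Q^* = |R_1|^{1/2}$ of \eqref{contraction Q}, the Borel-calculus commutation relations $J|R|=|R|J$ and $J_1|R_1|=|R_1|J_1$, and the polar identity $J_1|R_1| = R_1$. A short chain of substitutions then collapses the left-hand side to $R_1 J$. For the right-hand side, starting from $R_1 = \Sigma^* R = \Sigma^* J|R| = \Sigma^* |R| J$ (using $J|R|=|R|J$) and multiplying on the right by $J^{-1}=J$ gives $\Sigma^*|R| = R_1 J$ as well. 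The two sides therefore agree, and uniqueness finishes the argument.

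I do not foresee any real obstacle here: the content is pure algebraic manipulation of polar decompositions together with the defining identities for $Q$ and $A$. The only subtlety worth flagging is bookkeeping around the unbounded $R^{-1}$, which I sidestep by keeping every expression bounded --- i.e.\ multiplying on the right by $|R|^{1/2}$ rather than on the left by $|R|^{-1/2}$ --- and invoking density of $\Ran |R|^{1/2}$ at the final step.
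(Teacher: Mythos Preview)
Your proof is correct and uses essentially the same algebraic content as the paper's: both rely on the polar decompositions $R=J|R|$, $R_1=J_1|R_1|$, the commutation of $J$, $J_1$ with $|R|$, $|R_1|$, and the relation $|R_1|^{1/2}=Q|R|^{1/2}$. The only difference is organizational: the paper computes $A$ directly via the formula $A=|R|^{-1/2}\Sigma^*|R|^{1/2}$ from Remark~\ref{r: Douglas} and simplifies, whereas you verify the defining identity $\Sigma^*|R|^{1/2}=|R|^{1/2}(Q^*J_1QJ)$ and invoke uniqueness --- your route has the mild advantage of keeping every operator bounded throughout.
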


\begin{proof}
Note (see Remark \ref{r: Douglas}) that $Q=|R_1|^{1/2} |R|^{-1/2}$.  Using  Remark \ref{r: 
Douglas} again we can write
\begin{align*}
A= |R|^{-1/2} \Sigma^* |R|^{1/2} &= |R|^{-1/2} R_1 R^{-1} |R|^{1/2}  = |R|^{-1/2} |R_1| J_1 J 
|R|^{-1/2}  \\
& = |R|^{-1/2} |R_1|^{1/2} J_1 R_1|^{1/2} |R|^{-1/2} J = Q^*J_1 QJ ;
\end{align*}
here we used the fact that $J$ and $J_1$ commute with $|R|$ and $|R_1|$ respectively. 
\end{proof}

In addition, the following lemma, see \cite[Lemma 3.6]{MPT-Acta} gives the structure of $Q$
\begin{lm}
\label{structure of Q}
Let $\cH_{0}$ be the smallest invariant subspace of $|R|$ that contains $|R|^{-1}p=Jq$ (recall that 
$q$ is defined by $p=Rq$ with $\| q \| \leq 1$). Then $Q$ has the following block structure in the 
decomposition $\cH=\cH_0\oplus\cH_0^\perp$:
\begin{align}
\label{e:struct Q}
Q=\begin{pmatrix}
Q_{0} & 0 \\
0 &  I  \end{pmatrix}
\end{align}  
 where  $Q_{0}$ is a pure contraction (i.e.~$\| Q_0h\|<\|h\|$ for all $h\ne0$).  
\end{lm}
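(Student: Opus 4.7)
The plan is to prove the lemma in four natural parts, culminating in the technical argument that $Q_0$ is a pure contraction.

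First, I would verify $p \in \cH_0$. Writing the polar decomposition $R = J|R|$ with $J$ self-adjoint unitary commuting with $|R|$, the relation $p = Rq = J|R|q$ gives $|R|q = Jp$, hence $|R|^{-1}p = Jq$ and $p = |R|(Jq)$. Since $Jq \in \cH_0$ and $\cH_0$ is $|R|$-invariant, $p \in \cH_0$.

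Second, I would show that $\cH_0$ reduces $|R_1|$ as well. From $|R_1|^2 = |R|^2 - pp^*$ and $p \in \cH_0$, for any $h \in \cH_0$ we have $|R_1|^2 h = |R|^2 h - (h,p)p \in \cH_0$, so $\cH_0$ is $|R_1|^2$-invariant, hence $|R_1|$-invariant by the continuous functional calculus. Consequently both $\cH_0$ and $\cH_0^\perp$ reduce $|R|^{1/2}$, $|R_1|^{1/2}$, and the Douglas contraction $Q$. On $\cH_0^\perp$ the operator $pp^*$ vanishes (as $p\in\cH_0$), so $|R_1|^2 = |R|^2$ there; by uniqueness in the Douglas lemma, $Q|_{\cH_0^\perp} = I$.

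The substantive step is showing $Q_0 := Q|_{\cH_0}$ is a pure contraction. I would first establish injectivity of $|R|_0 - |R_1|_0$ on $\cH_0$. Let $\cM = \ker(|R|_0 - |R_1|_0)$. For $h \in \cM$, the equation $|R|h = |R_1|h$ combined with $|R|^2 - |R_1|^2 = pp^*$ yields $[|R|, |R_1|]h = (h, p)p$; pairing with $h$ and comparing — $\langle[|R|, |R_1|]h, h\rangle$ is purely imaginary while $|(h, p)|^2$ is real non-negative — forces $(h, p) = 0$. A direct check then gives $\cM$ is $|R|$-invariant, so $|R|^n h \perp p$ for all $n \ge 0$, i.e.\ $h \perp |R|^n p$. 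Since $p$ has non-zero projection on each eigenvector of $|R|_0$ (inherited from the cyclicity of $p$ for $R^2$ and the strict intertwining of $|\lambda_k|$'s), $p$ is cyclic for $|R|_0$ on $\cH_0$, and spectral calculus gives $\overline{\operatorname{span}}\{|R|^n p : n \ge 0\} = \cH_0$. Thus $\cM \subseteq \cH_0 \cap \cH_0^\perp = \{0\}$.

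Finally, from the defect identity $|R|_0^{1/2}(I - Q_0^*Q_0)|R|_0^{1/2} = |R|_0 - |R_1|_0$ and the just-proved injectivity, strict contraction follows immediately on the dense subspace $\operatorname{Ran}|R|_0^{1/2}$: for $h = |R|_0^{1/2}g \ne 0$, $\|h\|^2 - \|Q_0 h\|^2 = \langle(|R|_0 - |R_1|_0)g, g\rangle > 0$. The main obstacle is extending strict contraction from this dense subspace to all of $\cH_0$, since $\ker(I - Q_0^*Q_0)$ is closed and could in principle avoid $\operatorname{Ran}|R|_0^{1/2}$ while being non-trivial. I would handle this by exploiting the compactness of $|R|_0^{1/2}$: the identity $(I - Q_0^*Q_0)|R|_0^{1/2} = |R|_0^{-1/2}(|R|_0 - |R_1|_0)$ extends to a compact operator, and combining with the $Q_0$-invariance of $\ker(I - Q_0^*Q_0)$ (an easy check using $Q_0^*Q_0(Q_0 h) = Q_0(Q_0^*Q_0 h) = Q_0 h$) one derives a contradiction from any non-trivial element.
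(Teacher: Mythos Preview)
Your first three steps are correct and your approach through the injectivity of $|R|_0 - |R_1|_0$ is a nice variant. The gap is in the final extension step. The claimed $Q_0$-invariance of $\ker(I-Q_0^*Q_0)$ is false: the ``easy check'' $Q_0^*Q_0(Q_0 h) = Q_0(Q_0^*Q_0 h)$ presupposes $Q_0^*Q_0 Q_0 = Q_0 Q_0^*Q_0$, i.e.\ that $Q_0$ is quasi-normal, which is not given. For a concrete counterexample take $Q_0=\begin{pmatrix}0&1\\0&0\end{pmatrix}$; then $\ker(I-Q_0^*Q_0)=\operatorname{span}\{e_2\}$, but $Q_0 e_2 = e_1 \notin \ker(I-Q_0^*Q_0)$. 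Without this invariance your compactness argument does not get off the ground, and there is no obvious repair along those lines: a closed subspace can meet a dense subspace only at $0$, so knowing $\|Q_0 h\|<\|h\|$ on $\operatorname{Ran}|R|_0^{1/2}$ alone does not force $\ker(I-Q_0^*Q_0)=\{0\}$.

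The paper circumvents the extension problem altogether by working with a different identity. Instead of $|R|-|R_1|=|R|^{1/2}(I-Q^*Q)|R|^{1/2}$, it starts from $|R_1|^2=|R|^2-pp^*$ and, writing $|R_1|=|R|^{1/2}Q^*Q|R|^{1/2}$ and $p=|R|^{1/2}(|R|^{1/2}Jq)$, cancels $|R|^{1/2}$ on both sides to obtain
\[
Q^*Q\,|R|\,Q^*Q \;=\; |R| - (|R|^{1/2}Jq)(|R|^{1/2}Jq)^*.
\]
Now if $Q^*Qx=x$, pairing both sides against $x$ gives $x\perp |R|^{1/2}Jq$, and then \emph{applying} the identity to $x$ yields $Q^*Q(|R|x)=|R|x$. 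Thus $\ker(I-Q^*Q)$ is $|R|$-invariant (not $Q$-invariant) and orthogonal to $|R|^{1/2}Jq$; hence it is orthogonal to the $|R|$-cyclic subspace generated by $|R|^{1/2}Jq$, which contains $\cH_0$. This is exactly the invariance you need, and it comes directly from the squared relation rather than from the Loewner--Heinz step that produced your defect identity. Your argument can be rescued by inserting this identity in place of the last paragraph.
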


\begin{proof}  This proof essentially repeats  the proof from \cite{MPT-Acta}, we present it here 
only for the reader's convenience. 

We know that 
\begin{align}
\label{e:R_1^2 02}
R_1^2 &= R^2 -pp^*,
\end{align}
so $R^2$ coincides with $R_1^2$ on $\cH_0^\perp$. 

One can easily see that  $\cH_0$ is a reducing subspace for $R^2$ and for $R_1^2$, and these 
operators in the decomposition $\cH = \cH_0 \oplus \cH_0^\perp$ are block diagonal. 

It is also easy to see that $R^{1/2}$ and $R_1^{1/2}$ coincide on $\cH_0^{\perp}$, 
which is a reducing space for both operators, so $Q$ has the form \eqref{e:struct Q}, 
and only need to show that $Q_0$ is a strict contraction.

Using \eqref{e:R_1^2 02} and the identity $|R_1|^{1/2} = Q|R|^{1/2} = |R|^{1/2} Q^*$, we can write
\begin{align*}
R^2 -pp^* &= R_1^2  
         = |R|^{1/2}Q^*Q|R|Q^*Q|R|^{1/2} . 
\end{align*}
Recalling that $p= Rq = J|R| q = |R| J q$, we can rewrite the above identity as 
\begin{align*}
|R|^{1/2} \left(   |R|-(|R|^{1/2}Jq)(|R|^{1/2}Jq)^*   \right)  |R|^{1/2}   = 
|R|^{1/2}Q^*Q|R|Q^*Q|R|^{1/2} 
\end{align*}
which, because $\Ker R=\{0\}$, implies that
\begin{align}
\label{equation 1 of Q}
Q^*Q|R|Q^*Q = |R|-(|R|^{1/2}Jq)(|R|^{1/2}Jq)^*  .
\end{align}
Applying both sides to $x$, and taking the inner product with $x$, we get
\begin{align}
\label{equation 2 of Q}
(|R|Q^*Qx,Q^*Qx) = (|R|x,x)-|(x, |R|^{1/2}Jq)|^2 .
\end{align}

Now, take $x$ such that $\|Qx\|=\|x\|$. Since $\|Q\|\le 1$, this happens if and only if $x = 
Q^*Qx$.  The equation \eqref{equation 2 of Q} can be rewritten in this case as 
\begin{align*}
(|R| x, x) = (|R|x,x)-|(x, |R|^{1/2}Jq)|^2 , 
\end{align*}
which implies that $x \perp |R|^{1/2}Jq$.  Applying equation \eqref{equation 1 of Q} to such $x$, 
and 
using again the fact that $Q^*Qx=x$,  we get that 
\[
Q^*Q|R|x = |R|x .
\]
Hence set $\cH_1:= \left\{h\in \cH : h \in \cH, \|Qh\| = \|h\| \right\} =\Ker(I-Q^*Q)$ is an 
invariant subspace for $|R|$ (and so for $|R|^{1/2}$), which is orthogonal to $J|R|^{1/2}q$. 
Therefore
\begin{align*}
\cH_1 \perp \cspn\{|R|^{n/2} |R|^{1/2} J q : n\ge0\} & = \cspn\{|R|^{n/2} p : n\ge2\} 
\\ &\supset\cspn\{|R|^{n} p : n\ge1\} = \cH_0 ; 
\end{align*}
in the last equality we used the fact that  $|R|p$ is also cyclic for $|R|\big|_{\cH_0}$. 
Thus 
 $Q_0=Q|_{\cH_0}$ is a strict contraction, and the lemma is proved.
\end{proof}


Returning to our situation, recall that the operator $R_1^2$ is a rank one perturbation of $R^2$, 
\[
R_1^2 = R^2-pp^*, 
\]
and that $R^2$ and $R_1^2$ have simple eigenvalues $\lambda_k^2$ and $\mu_k^2$ respectively. The 
strict intertwining relations \eqref{intertwine 01} imply that for all $k$
\begin{align*}
P\ci{\Ker (R^2-\lambda_k^2 I)} p \ne 0
\end{align*}
(because if $P\ci{\Ker (R^2-\lambda_k^2 I)} p = 0$ then $\Ker (R^2-\lambda_k^2 I) = \Ker 
(R_1^2-\lambda_k^2 I)$, so $\mu_k=\lambda_k$), so $p$ is a cyclic vector for $|R|$. Therefore in 
out case $\cH_0=\cH$, so $Q$ is a pure contraction. Since $A=Q^*J_1QJ$, the operator $A$ is also a 
pure contraction. \hfill\qed  

So far we have shown that the contraction $T$ is asymptotically stable. This finishes our proof for 
the existence and uniqueness of unitary equivalent Hankel operator in Prop \ref{uniqueHankel}.

\section{Abstract Borg's theorem }

Let as introduce some terminology. Consider a triple $R$, $R_1$, $p$, where $R$, $R_1$ are 
operators on a Hilbert space $\cH$ and $p\in\cH$ (we call this a triple on $\cH$). We say that two 
triples $R$, $R_1$, $p$ and $\wt R$, $\wt R_1$, $\wt p$ on $\cH$ and $\wt \cH$ respectively are 
unitary equivalent if there exists a unitary operator $U:\cH\to \wt\cH$ such that 
\begin{align*}
\wt R = U R U^*, \qquad \wt R_1 = U R_1 U^*, \qquad \wt p = Up.     
\end{align*}

%
%


 
The main result, Theorem \ref{t:main 02} easily follows, see Section \ref{s: Borg to main}  from 
the theorem below, applied to the operators $W=R^2$ and $W_1=R_1^2 = W-pp^*$. 

\begin{thm}[Abstract Borg's Theorem]
\label{t:Borg 01}
Given two sequences $(\lambda_k^2)_{k\ge 1}$ and $(\mu_k^2)_{k\ge 1}$ satisfying intertwining 
relations \eqref{intertwine 01} and such that $\lambda_k^2\to 0$ as $k\to \infty$, there exist a 
unique \tup{(}up to unitary equivalence\tup{)} triple $W$, $W_1$, $p$, such that  
\begin{enumerate}
\item  $W=W^*\ge 0$, $\Ker W=\{0\}$ is a 
compact operator with simple eigenvalues $( \lambda_k^2)_{k=1}^{\infty}$;
\item $p\in\cH$ and $W_1 = W-pp^*$ is a compact operator with non-zero eigenvalues $(\mu_{k}^2 
)_{k=1}^{\infty}$  
\tup{(}$W_1$ can 
also have a simple eigenvalue at $0$, and it is not hard to show that all the eigenvalues are 
simple\tup{)}.
\end{enumerate}

Moreover, $\|W^{-1/2} p\|=1$ if and only if 
\begin{align}
\label{e: norm q = 1 02}
\sum\limits_{j=1}^{\infty} \Bigg( 1-\frac{\mu_{j}^{2}}{\lambda_{j}^{2}} \Bigg) = \infty;
\end{align}
in addition, if \eqref{e: norm q = 1 02} holds, then  $\|W^{-1} p\|=\infty$ (i.e.~$p\notin\ran W$) 
if and only if 
\begin{align}
\label{e: q not in ran R 02}
\sum_{j=1}^\infty \left( \frac{\mu_j^2}{\lambda_{j+1}^2} -1 \right) =\infty .
\end{align}
\end{thm}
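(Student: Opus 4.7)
My plan is to reduce the theorem to the single complex-analytic identity
\begin{equation*}
G(z) := \prod_{k=1}^{\infty} \frac{\mu_k^2 - z}{\lambda_k^2 - z} \;=\; 1 - \sum_{k=1}^{\infty} \frac{c_k^2}{\lambda_k^2 - z} \;=:\; 1 - m(z),
\end{equation*}
where the $c_k$ are the (positive) Fourier coefficients of $p$ in an orthonormal eigenbasis of $W$. Cyclicity of $p$ for $W$, forced by the strict intertwining exactly as in the observation following Proposition \ref{p:asy stability}, guarantees $c_k \ne 0$, and by choosing phases of the eigenvectors one may take $c_k > 0$. Once the identity is in hand, Krein's rank-one perturbation formula tells us that the eigenvalues of $W_1 = W - p p^*$ outside the spectrum of $W$ are precisely the solutions of $m(z) = 1$, i.e.~the zeros of $G$, which by construction are the $\mu_k^2$. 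Existence, uniqueness up to unitary equivalence, and the explicit recipe $c_k^2 = (\lambda_k^2 - \mu_k^2)\prod_{j \ne k}(\mu_j^2 - \lambda_k^2)/(\lambda_j^2 - \lambda_k^2)$ all drop out.

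To prove the identity I would use finite truncations $G_N(z) = \prod_{k=1}^N (\mu_k^2 - z)/(\lambda_k^2 - z)$, each a rational function of bidegree $(N,N)$ with $G_N(\infty) = 1$. Partial fractions give the exact identity $G_N = 1 - m_N$ with $m_N(z) = \sum_{k=1}^N \alpha_k^{(N)}/(\lambda_k^2 - z)$ and residues $\alpha_k^{(N)} > 0$ given by the truncated version of the formula for $c_k^2$. A direct inspection shows $\alpha_k^{(N)} \uparrow c_k^2$ as $N \to \infty$, while $\sum_{k=1}^N \alpha_k^{(N)} = \sum_{k=1}^N (\lambda_k^2 - \mu_k^2) \le \lambda_1^2$ by the telescoping $\sum(\lambda_k^2 - \mu_k^2) + \sum(\mu_k^2 - \lambda_{k+1}^2) = \lambda_1^2$. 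Monotone convergence then yields $\sum c_k^2 = \sum(\lambda_k^2 - \mu_k^2) < \infty$ and $m_N \to m$ pointwise off the real axis, while $G_N \to G$ uniformly on compact subsets of $\C \setminus [0, \lambda_1^2]$ by the absolute convergence of the infinite product (again from $\sum(\lambda_k^2 - \mu_k^2) < \infty$). Passing to the limit in $G_N = 1 - m_N$ gives the identity.

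The two kernel conditions are direct evaluations. Since $q = W^{-1/2} p$, one has $\|q\|^2 = m(0^-) = 1 - G(0^-) = 1 - \prod_k \mu_k^2/\lambda_k^2$, with the limit $z \to 0^-$ along the negative real axis through monotone convergence of both the series and the product. Each $\mu_k^2/\lambda_k^2$ lies in $(0,1)$, and the elementary equivalence $\prod x_k = 0 \Longleftrightarrow \sum (1 - x_k) = \infty$ gives \eqref{e: norm q = 1 02}. For \eqref{e: q not in ran R 02}, assume $\|q\| = 1$ so $G(0^-) = 0$; then $\|W^{-1}p\|^2 = m'(0^-) = \lim_{z \to 0^-} G(z)/(-z)$. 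The telescoping $\prod_{k=1}^N(\lambda_k^2 - z)/(\lambda_{k+1}^2 - z) = (\lambda_1^2 - z)/(\lambda_{N+1}^2 - z)$ together with $\lambda_{N+1}^2 \to 0$ rearranges this as
\begin{equation*}
\frac{G(z)}{-z} \;=\; \frac{H(z)}{\lambda_1^2 - z}, \qquad H(z) := \prod_{k=1}^\infty \frac{\mu_k^2 - z}{\lambda_{k+1}^2 - z},
\end{equation*}
a product of factors strictly exceeding $1$ at $z = 0$. Hence $H(0) = +\infty$ iff $\sum(\mu_k^2/\lambda_{k+1}^2 - 1) = \infty$, which is exactly \eqref{e: q not in ran R 02}.

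The one genuinely delicate point is the meromorphic identity $G = 1 - m$, because $0$ is a shared accumulation point of the zeros $\{\mu_k^2\}$ and the poles $\{\lambda_k^2\}$ of both sides, so neither side is holomorphic near $0$; the finite-rank truncation approach, bootstrapped by the uniform trace bound $\lambda_1^2$, bypasses this cleanly. Everything else is routine complex analysis.
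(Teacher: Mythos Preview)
Your overall framework matches the paper's: the central object is the product $G = \Phi$ of \eqref{e:Phi}, and the identity $G = 1 - m$ with $m$ the Cauchy transform of the spectral measure of $p$. Your existence argument via finite truncations $G_N = 1 - m_N$, the monotonicity $\alpha_k^{(N)} \nearrow c_k^2$, and the uniform bound $\sum_k \alpha_k^{(N)} = \sum_k(\lambda_k^2 - \mu_k^2) \le \lambda_1^2$ is essentially the paper's Lemma~\ref{l: Phi decomposition}, and the first kernel condition (monotone limit at $z = 0^-$) is identical to the paper's.

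There is, however, a genuine gap in your treatment of \emph{uniqueness}. Your truncation argument establishes $G = 1 - m$ only for the \emph{specific} weights $c_k^2$ obtained as residues of $G$, and this gives existence. But for uniqueness you must show that \emph{any} valid triple---equivalently, any choice of strictly positive weights $\tilde c_k^2$ for which the zero set of $1 - \tilde m$ is exactly $\{\mu_k^2\}$---forces $\tilde c_k^2 = c_k^2$. Krein's formula runs the wrong direction for this: it locates the eigenvalues once the weights are given, not the reverse, and the phrase ``all drop out'' hides the missing step. What is actually needed is that any analytic function on $\C\setminus\sigma$ with simple poles at the $\lambda_k^2$, simple zeros at the $\mu_k^2$, value $1$ at infinity, and negative imaginary part on $\C_+$ must coincide with $G$. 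This is the paper's Lemma~\ref{l:Phi unique}: the ratio $(1-\tilde m)/G$ is zero-free on $\C\setminus\{0\}$, never takes negative real values (by the Herglotz property of both numerator and denominator), hence $0$ is not an essential singularity by Picard (or Casorati--Weierstrass applied to the principal square root), and Liouville plus the value at infinity gives the ratio $\equiv 1$. Your finite-rank bypass handles the accumulation point $0$ for the \emph{forward} identity, but it does not touch this step, since an arbitrary $\tilde m$ is not a limit of truncations of $G$.

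Your argument for the second kernel condition, by contrast, is both correct and cleaner than the paper's. The paper (Lemma~\ref{l: prod = infy 02}) substitutes $z = -\lambda_N^2$, controls a tail product, and then invokes the Harnack inequality on shrinking discs to compare $G_N(-\lambda_N^2)$ with $G_N(0)$. Your telescoping rearrangement $G(z)/(-z) = H(z)/(\lambda_1^2 - z)$ with $H(z) = \prod_k(\mu_k^2 - z)/(\lambda_{k+1}^2 - z)$, followed by monotone convergence of $\log H$ as $z\to 0^-$, reaches the same criterion $\prod_k \mu_k^2/\lambda_{k+1}^2 = \infty$ directly and avoids the potential-theoretic detour entirely.
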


\begin{rem}
\label{r:abstract Borg}
The original Borg's  theorem \cite{Borg_Acta_1946} states that the potential $q$ of a 
Schr\"{o}dinger operator $L$,  $Ly = y'' + q(x)y$ on an interval is uniquely defined by the two 
sets of eigenvalues, corresponding to two  specific boundary conditions. Later Levinson 
\cite{Levinson 1949} extended this result by showing that essentially any non-degenerate pair of 
self-adjoint boundary conditions would work. 

Changing boundary conditions for a Schr\"{o}dinger operator is essentially a rank one perturbation 
(by an unbounded operator).  Namely, if $L_1$ and $L_2$ are Schr\"{o}dinger operators on an 
interval with the same potential, but with two different self-adjoint boundary conditions, then for 
any $\lambda\notin \sigma (L_1) \cup\sigma(L_2)$ the difference $(L_1-\lambda I)^{-1} - 
(L_2-\lambda I)^{-1}$ is a rank one operator (and the operators $(L_1-\lambda I)^{-1}$, 
$(L_2-\lambda I)^{-1}$ are compact). Thus, by picking a real $\lambda$ the problem can be reduced 
to rank one perturbations of compact self-adjoint operators. 

Our Theorem \ref{t:Borg 01} deals with rank one perturbations of (abstract) compact self-adjoint 
operators, hence the name. We do not  assume that our operators came from Schr\"{o}dinger 
operators, so we only reconstructing the spectral measure, and are not concerned with the 
reconstruction of the potential.  However, it is well known how to  reconstruct the potential from 
the spectral measure, or, more precisely, from the Titchmarsh--Weyl $m$-function, so it should be 
possible to get the Borg's result from our abstract theorem. 

Note also, that Theorem \ref{t:Borg 01} give not only uniqueness, but the existence as well.
\end{rem}

\subsection{Abstract Borg's Theorem implies the main result} 
\label{s: Borg to main} Let us now explain how the above Theorem \ref{t:Borg 01} implies Theorem 
\ref{t:main 02}. First of all, if we know  the triple $W=R^2$, $W_1=R_1^2$, $p$, and know the 
eigenvalues of $R$ and $R_1$ (it is sufficient to know only their signs), we can reconstruct the 
unique triple $R$, $R_1$, $p$ just by taking appropriate square roots of $W$ and $W_1$. Namely, if 
$u_k$ and $v_k$ are eigenvectors of $W$ and $W_1$, 
\begin{align*}
W u_k = \lambda_k^2 u_k, \qquad W_1 v_k = \mu_k^2 v_k, 
\end{align*}
we put 
\begin{align*}
R u_k = \lambda_k u_k, \qquad R_1 v_k = \mu_k v_k
\end{align*}
(and of course, we put $R_1 x = 0$ for $x\in \ker W_1$). And it is easy to see that a self-adjoint 
square root with prescribed signs of eigenvalues is unique and is given by the above formulas.

So, if we are given the eigenvalues $(\lambda_k)_{k\ge1}$,  $(\mu_k)_{k\ge1}$, we first construct 
the (unique up to unitary equivalence)  triple $W$, $W_1$, $p$ using the eigenvalues 
$(\lambda_k^2)_{k\ge1}$,  $(\mu_k^2)_{k\ge1}$, and then take the (unique) square roots with 
prescribed signs of eigenvalues to get the operators $R$ and $R_1$. Note, that the 
triple $R$, $R_1$, $p$ is unique up to unitary equivalence. 

By Proposition \ref{p:asy stability} the operator $\Sigma^*=R_1 R^{-1}$ is asymptotically stable, 
so by Proposition \ref{uniqueHankel} there exists a unique Hankel operator $\Gamma$ such that the 
triple $\Gamma|_{(\ker\Gamma)^\perp}$, $(\Gamma S)|_{(\ker\Gamma)^\perp}$, 
$u:= \Gamma e_0 = \Gamma P\ci{(\ker\Gamma)^\perp} e_0$ is unitarily equivalent to the triple $R$, 
$R_1$, $p$. This implies, in particular, that the non-zero eigenvalues of $\Gamma$ and of $\Gamma 
S$ are simple and coinside with $\{\lambda_k\}_{k\ge1}$ and $\{\mu_k\}_{k\ge1}$ respectively. So, 
the existence and uniqueness part of 
Theorem \ref{t:main 02} is proved. 

As for the conditions for $\ker\Gamma=\{0\}$, we just note that the conditions $\|q\|=1$ and 
$q\notin\ran R$ from Proposition \ref{uniqueHankel} translate to $\|R^{-1}p\|=1$ and 
$\|R^{-2}p\|=\infty$. But $\|R^{-1}p\|=\|W^{-1/2}p\|$, $\|R^{-2}p\|=\|W^{-1}p\|$, and thus the 
statement 
about triviality of the kernel of $\Gamma$ follows. \hfill\qed

\subsection{Proof of the abstract Borg's Theorem: existence and uniqueness part}
First of all notice that the intertwining condition \eqref{intertwine 01} implies that the vector 
$p$ must be cyclic for $W$. Since everything is defined up to unitary equivalence, we than can 
assume without loss of generality that $W$ is the multiplication $M_s$ by the independent variable 
$s$ in the weighted space $L^2(\rho)$, where $\rho$ is the spectral measure, corresponding to the 
vector $p$. 

Recall, that this spectral measure can be defined as the unique (finite,  Borel, compactly 
supported) measure on $\R$ such that 
\begin{align*}
((W- zI)^{-1}p, p) = \int_\R \frac{d\rho(s)}{s-z}\qquad \forall z\in \C\setminus \R. 
\end{align*}
Since $W$ is a compact operator with eigenvalues $(\lambda_k^2)_{k\ge1}$, the measure $\rho$ is purely atomic, 
\begin{align}
\label{e: spectral measure rho}
\rho = \sum_{k\ge 1} a_k \delta_{\lambda_k^2}, \qquad a_k>0. 
\end{align}
Note also that in this representation the vector $p$ is represented by the function $1$ in $L^2(\rho)$.

Since everything is considered up to unitary equivalence, we can always assume that $W$ is the 
multiplication operator $M_s$ by the independent variable $s$ in the weighted space $L^2(\rho)$, 
where the spectral measure $\rho$ is given by \eqref{e: spectral measure rho}, with $p\equiv 1$. 
Note that in this representation position $\lambda_k^2$ of delta functions are fixed (because they 
must correspond to the eigenvalues of $W$), but the weights $a_k$ are at the moment unknown. 

Note, that for $p\equiv1$ a choice of the weights $a_k$ completely  defines the pair $W$, $p$ up to 
unitary equivalence.%
\footnote{Of course, any choice of non-vanishing weights $a_k>0$ gives unitary equivalent operators 
$W$, but the vectors $p\equiv1$ are not transformed according to the unitary equivalence. }
Moreover, the choice of the weights $a_k$ is  completely defines the triple $W$, $W_1$, $p$ (up to 
unitary equivalence). Indeed if the weights $a_k$ are known 
(recall that in our representation  $p\equiv1$), 
the operator $W_1$ is uniquely defined in the above spectral representation of $W$ as  $W_1 = W 
-pp^*$.  

\subsubsection{Cauchy transforms of spectral measures}
\label{s: C rho}
Let us recall some standard facts in perturbation theory. Let $W=W^*$ be a (bounded, for 
simplicity) self-adjoint operator, and let $p$ be its cyclic vector.  The spectral measure $\rho$ 
corresponding to the vector $p$ is defined  as the unique Borel measure on $R$ such that 
\begin{align}
\label{e: spectral measure 02}
F(z):= ((W- zI)^{-1}p, p) = \int_\R \frac{d\rho(s)}{s-z}\qquad \forall z\in \C\setminus \R. 
\end{align}
As it is customary in perturbation theory, define a family of rank one perturbations
\begin{align*}
W^{\{\alpha\}} := W+ \alpha p p^*, \qquad \alpha\in\R . 
\end{align*}
Note that in this notation $W_1 = W^{\{-1\}} $. 

The spectral measures $\rho^{\{\alpha\}}$  are defined as the unique Borel measures such that 
\begin{align}
\label{e: spectral measure 03}
F^{\{\alpha\}}(z):= ((W^{\{\alpha\}}- zI)^{-1}p, p) = \int_\R 
\frac{d\rho^{\{\alpha\}}(s)}{s-z}\qquad \forall z\in \C\setminus \R; 
\end{align}
the functions $F^{\{\alpha\}}$ are the Cauchy transforms of $\rho^{\{\alpha\}}$.

The relation between the Cauchy transforms $F$ and $F^{\{\alpha\}}$ 
is given by the famous Aronszajn--Krein formula,
\begin{align}
\label{e: F F^gamma}
F^{\{\alpha\}}= \frac{F}{1+\alpha F} , 
\end{align}
which is an easy corollary of the standard resolvent identities. 

Recall that our operator $W_1$ is exactly the operator $W^{\{-1\}}$, so let us for the consistency 
denote $F_1:= F^{\{-1\}}$. Identity \eqref{e: F F^gamma} can then be rewritten as 
\begin{align}
\label{e:F_1 via F}
F_1 & = \frac{F}{1-F}  
\intertext{so}
\label{e: F/F_1}
\frac{F}{F_1} &= 1-F. 
\end{align}

So, to prove the first part of Theorem \ref{t:Borg 01} (existence and uniqueness of the triple $W$, 
$W_1$, $p$), it is sufficient to show that there exists a unique measure $\rho$ of form \eqref{e: 
spectral measure rho}
such that 
if $F$ is the Cauchy transform of $\rho$
\begin{align}
\label{e: C rho}
F(z) = \int_\R \frac{d\rho(s)}{s-z}\qquad \forall z\in \C\setminus \R
\end{align}
then the function $F_1$ defined by \eqref{e:F_1 via F} has the poles exactly at points $\mu_k^2$. 
Here the points $\lambda_k^2$ and $\mu_k^2$ are given, and the weights $a_k>0$ are to be found.

\subsubsection{Guessing the function \texorpdfstring{$F$}{F}} We want to reconstruct the spectral 
measure $\rho$ from the sequences $\{\lambda_k\}_{k\ge1}$ and $\{\mu_k\}_{k\ge1}$. 

Denote  
\begin{align*}
\sigma:= \sigma(W) = \{\lambda_k^2 \}_{k\ge1} \cup\{0\}, \qquad
\sigma_1:= \sigma(W_1) = \{\mu_k^2 \}_{k\ge1} \cup\{0\}. 
\end{align*}
It trivial that $F$ and $F_1$ are analytic in $\C\setminus \sigma$ and $\C\setminus \sigma_1$ 
respectively, and having simple poles at points $\lambda_k$ and $\mu_k$, $k\ge 1$ respectively 
(note that while the measure $\rho_1= \rho^{\{-1\}}$ can have a mass at $0$, the point $0$ is not 
an isolated singularity). 

Identity \eqref{e: F/F_1}  (which holds on $\C\setminus(\sigma\cup\sigma_1)$) implies that $F/F_1$ 
has simple poles at the points $\lambda_k^2$, $k\ge 1$, and that it can be analytically extended to 
$\C\setminus\sigma$. The (isolated) zeroes of $1-F$ must be at points where $F(z)=0$, so they must 
be only at the poles of $F_1$, i.e.~at the points $\mu_k^2$, $k\ge1$. 

So the function $F/F_1=1-F$ must be a function analytic on $\C\setminus \sigma$ with simple poles 
at the points $\lambda_k^2$, $k\ge 1$ and simple zeroes at the points $\mu_k^2$. 

One can try to write such a function, namely one could guess that 
\begin{align*}
1-F(z) = \prod_{k\ge 1 } \left( \frac{z- \mu_k^2  }{z-\lambda_k^2 } \right) \,.
\end{align*}
We will show that this is indeed the case; from there we will trivially get the formula for $F$, 
and then compute the weights $a_k$.

\subsubsection{Existence and uniqueness of our guess for \texorpdfstring{$F$}{F}} 

Denote 
\begin{align}
\label{e:Phi}
\Phi(z):= \prod_{k\ge 0 } \left( \frac{z- \mu_k^2  }{z-\lambda_k^2 } \right) \,.
\end{align}
We have guessed that $1-F(z) = \Phi(z)$. But first we need to show that the product \eqref{e:Phi}
is well defined. 

\begin{lm}
\label{l:Phi converges}
The product \eqref{e:Phi} converges uniformly on compact subsets of $\C\setminus \sigma$, and 
moreover
\begin{align}
\label{e: Phi(infty)=1}
\Phi(\infty) := \lim_{z\to \infty} \Phi(z) = 1
\end{align}
\end{lm}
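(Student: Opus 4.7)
The plan is to rewrite each factor in a form that isolates its deviation from $1$, namely
\[
\frac{z-\mu_k^2}{z-\lambda_k^2} = 1 + \frac{\lambda_k^2-\mu_k^2}{z-\lambda_k^2},
\]
and then appeal to the standard criterion that an infinite product $\prod_k(1+f_k(z))$ converges uniformly (to a nonvanishing limit, away from poles) on a set $K$ as soon as $\sum_k|f_k(z)|$ converges uniformly on $K$.

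The first key step is to establish the summability of $\lambda_k^2-\mu_k^2$. From the intertwining inequalities \eqref{intertwine 01} we have $\mu_k^2>\lambda_{k+1}^2$, hence
\[
0<\lambda_k^2-\mu_k^2<\lambda_k^2-\lambda_{k+1}^2,
\]
and telescoping plus $\lambda_k\to0$ gives $\sum_{k\ge 1}(\lambda_k^2-\mu_k^2)\le \lambda_1^2<\infty$.

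Next, for a compact $K\subset\C\setminus\sigma$ we have $\delta:=\dist(K,\sigma)>0$, so $|z-\lambda_k^2|\ge\delta$ for every $z\in K$ and every $k$. Therefore
\[
\left|\frac{\lambda_k^2-\mu_k^2}{z-\lambda_k^2}\right|\le\frac{\lambda_k^2-\mu_k^2}{\delta}\qquad(z\in K),
\]
and the right-hand side is summable by the previous step. The Weierstrass criterion for infinite products then yields uniform convergence of $\Phi$ on $K$; I would quote this rather than reprove it, since there is nothing delicate to verify. This part is completely routine, and I do not expect any real obstacle.

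For \eqref{e: Phi(infty)=1}, the point is that the same bound can be upgraded for large $|z|$: once $|z|\ge 2\lambda_1^2$ we have $|z-\lambda_k^2|\ge|z|/2$ for all $k\ge 1$, so
\[
\sum_{k\ge 1}\left|\frac{\lambda_k^2-\mu_k^2}{z-\lambda_k^2}\right|\le\frac{2}{|z|}\sum_{k\ge 1}(\lambda_k^2-\mu_k^2)\le\frac{2\lambda_1^2}{|z|}\xrightarrow[|z|\to\infty]{}0.
\]
Combining this with the inequality $|\log(1+w)|\le 2|w|$ valid for $|w|\le 1/2$, we see that $\sum_{k\ge 1}\log(1+(\lambda_k^2-\mu_k^2)/(z-\lambda_k^2))\to 0$ as $|z|\to\infty$, whence $\Phi(z)\to 1$. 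This establishes $\Phi(\infty)=1$ and completes the lemma.
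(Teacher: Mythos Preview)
Your proof is correct and follows essentially the same route as the paper's: rewrite each factor as $1+(\lambda_k^2-\mu_k^2)/(z-\lambda_k^2)$, bound the numerators by a telescoping sum to get $\sum_k(\lambda_k^2-\mu_k^2)\le\lambda_1^2$, use $\dist(K,\sigma)>0$ for the uniform bound on compacts, and then sharpen the denominator bound to $|z|/2$ for $|z|\ge 2\lambda_1^2$ to handle the limit at infinity. The only cosmetic difference is that you spell out the logarithm estimate $|\log(1+w)|\le 2|w|$ to pass from the vanishing sum to $\Phi(z)\to 1$, whereas the paper leaves this last implication implicit.
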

\begin{proof}
Fix a compact $K\subset \C\setminus\sigma$
Trivially 
\begin{align}
\label{e: 1 - Phi_k}
\left| 1-\frac{z- \mu_k^2  }{z-\lambda_k^2 } \right| = \left| \frac{\lambda_k^2 - \mu_k^2  
}{z-\lambda_k^2 } \right|
\le C(K) (\lambda_k^2 - \mu_k^2) \qquad \forall z\in K . 
\end{align}
Since
\begin{align}
\label{e: sum la_k - mu_k}
\sum_{k\ge1} (\lambda_k^2 - \mu_k^2) \le \sum_{k\ge1} (\lambda_k^2 - \lambda_{k+1}^2) \le 
\lambda_1^2 <\infty
\end{align}
we have 
\begin{align*}
\sum_{k\ge 1} \left| 1-\frac{z- \mu_k^2  }{z-\lambda_k^2 } \right| \le C(K) \lambda_1^2 <\infty 
\qquad \forall z\in\C\setminus\sigma. 
\end{align*}
But this implies that the product \eqref{e:Phi} converges uniformly on compact subsets of 
$\C\setminus \sigma$. 

To prove the second statement, take any $R>2\lambda_1^2$. Then,  see \eqref{e: 1 - Phi_k} 
\begin{align*}
\left| 1-\frac{z- \mu_k^2  }{z-\lambda_k^2 } \right| = \left| \frac{\lambda_k^2 - \mu_k^2  
}{z-\lambda_k^2 } \right|
\le \frac2R (\lambda_k^2 - \mu_k^2) \qquad \forall z:\ |z|>R. 
\end{align*}
Using \eqref{e: sum la_k - mu_k} we get that for all $|z|>R$
\begin{align*}
\sum_{k\ge 1} \left| 1-\frac{z- \mu_k^2  }{z-\lambda_k^2 } \right| \le \frac{2\lambda_1^2}{R} , 
\end{align*}
which immediately implies \eqref{e: Phi(infty)=1}. 
\end{proof}

We will need the following definition

\begin{df}
	Recall that an analytic in the upper half-place $\C_+$ function $f$ is called Herglotz (or 
	Nevanlinna) if $\im f(z)\ge 0$ for all $z\in\C_+$. 
	
	We should mention, that any non-constant Herglotz function satisfies the strict inequality 
	$f(z)>0$ for all $z\in\C_+$. 
\end{df}

\begin{lm}
\label{l: properties Phi}
The function $\Phi$ defined by \eqref{e:Phi} satisfies the following properties
\begin{enumerate}
\item $\Phi(\overline z) = \overline{\Phi(z)}$; in particular, $\Phi(x)$ is real for all 
$x\in\R\setminus\sigma$. 

\item The function $\Phi$ has simple poles at points $\lambda_k^2$, and its only zeroes are the 
simple zeroes at points $\mu_k^2$. 

\item $\displaystyle \lim_{z\to \infty} \Phi(z) = 1$. 

\item The function $-\Phi$ is a Herglotz (Nevanlina) function in $\C_+$, i.e.~$\im \Phi(z) <0$ for 
all $z\in\C_+$.  
\end{enumerate}
\end{lm}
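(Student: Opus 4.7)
The plan is to dispose of (i)--(iii) by routine arguments and concentrate on (iv), which is where the real content lies.

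For (i), each partial product $\prod_{k=1}^N\frac{z-\mu_k^2}{z-\lambda_k^2}$ has real coefficients and so satisfies the conjugation symmetry; uniform convergence on compact subsets of $\C\setminus\sigma$ (Lemma \ref{l:Phi converges}) passes this to the limit. Statement (iii) is already contained in \eqref{e: Phi(infty)=1}. For (ii), I would isolate the $k$-th factor and write
\begin{align*}
\Phi(z)=\frac{z-\mu_k^2}{z-\lambda_k^2}\,\Psi_k(z),\qquad \Psi_k(z):=\prod_{j\ne k}\frac{z-\mu_j^2}{z-\lambda_j^2}.
\end{align*}
The absolute-convergence estimate used in the proof of Lemma \ref{l:Phi converges} shows that $\Psi_k$ extends analytically across $\lambda_k^2$, and the intertwining \eqref{intertwine 01} gives $\lambda_k^2\ne\mu_j^2$ for every $j\ne k$ (for $j>k$ one has $\mu_j^2<\lambda_k^2$; for $j<k$ one has $\mu_j^2\ge\lambda_{j+1}^2\ge\lambda_k^2$ with strict inequality), so $\Psi_k(\lambda_k^2)\ne 0$. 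Hence the pole at $\lambda_k^2$ is simple; symmetrically the zero at $\mu_k^2$ is simple, and absolute convergence rules out any other zero in $\C\setminus\sigma$.

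The substantive step is (iv). I would prove it by approximating $\Phi$ by its finite truncations
\begin{align*}
\Phi_N(z):=\prod_{k=1}^N\frac{z-\mu_k^2}{z-\lambda_k^2}.
\end{align*}
Each $\Phi_N$ is a rational function with $\Phi_N(\infty)=1$ and simple poles only at $\lambda_1^2,\ldots,\lambda_N^2$, so its partial fraction expansion must take the form
\begin{align*}
\Phi_N(z)=1+\sum_{k=1}^N\frac{b_k^{(N)}}{z-\lambda_k^2},\qquad b_k^{(N)}=(\lambda_k^2-\mu_k^2)\prod_{\substack{1\le j\le N\\ j\ne k}}\frac{\lambda_k^2-\mu_j^2}{\lambda_k^2-\lambda_j^2}.
\end{align*}
The crucial observation is that each $b_k^{(N)}$ is strictly positive. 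This is a direct sign check using \eqref{intertwine 01}: for $j>k$ the numerator $\lambda_k^2-\mu_j^2$ and the denominator $\lambda_k^2-\lambda_j^2$ are both positive; for $j<k$ both are negative; and the prefactor $\lambda_k^2-\mu_k^2$ is positive. Therefore for every $z\in\C_+$
\begin{align*}
\im\Phi_N(z)=-\im z\sum_{k=1}^N\frac{b_k^{(N)}}{|z-\lambda_k^2|^2}<0.
\end{align*}
Since $\Phi_N\to\Phi$ uniformly on compact subsets of $\C_+$ by Lemma \ref{l:Phi converges}, passing to the limit yields $\im\Phi\le 0$ on $\C_+$. Because $\Phi$ is non-constant on $\C_+$ (it blows up at the boundary point $\lambda_1^2$ while $\Phi(\infty)=1$), the open mapping theorem promotes this to the strict inequality $\im\Phi(z)<0$ for all $z\in\C_+$.

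The only genuine obstacle is the sign bookkeeping for the residues $b_k^{(N)}$; this is precisely what the strict intertwining \eqref{intertwine 01} is designed to deliver, and the rest of the argument is formal.
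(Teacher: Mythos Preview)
Your proof is correct. Parts (i)--(iii) match the paper's treatment (which simply declares (i)--(ii) trivial and refers back to Lemma~\ref{l:Phi converges} for (iii)); your handling of (ii) is more detailed than the paper's but fine.

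For (iv) you take a genuinely different route. The paper argues geometrically: for $z\in\C_+$ it identifies $-\Arg\bigl((z-\mu_k^2)/(z-\lambda_k^2)\bigr)$ with the angular aperture $\alpha_k(z)$ under which the interval $[\mu_k^2,\lambda_k^2]$ is seen from $z$, and since these intervals are disjoint and lie in $\R_+$, the apertures sum to something strictly between $0$ and $\pi$, giving $\im\Phi(z)<0$ directly. Your argument instead computes the partial-fraction residues of the truncations $\Phi_N$, verifies their positivity from the intertwining, reads off $\im\Phi_N<0$, and upgrades $\im\Phi\le0$ to strict via the open mapping theorem. Both are short; the paper's version yields the strict inequality in one stroke, while yours has the advantage of anticipating the decomposition of Lemma~\ref{l: Phi decomposition} (your $b_k^{(N)}$ are exactly the $a_k^N$ there), so it introduces no machinery that is not needed elsewhere.
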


\begin{proof}
Statements \cond1, \cond2  are trivial, statement \cond3 was stated and proved in Lemma \ref{l:Phi 
converges} above. 

Let us prove statement \cond4. Let $\Arg$ denote the \emph{principal value} of the argument, taking 
values in the interval $(-\pi, \pi]$. It is easy to see that for $z\in \C_+$
\begin{align*}
  - \Arg \left( \frac{z- \mu_k^2  }{z-\lambda_k^2 } \right)
= - \Arg \left( \frac{\mu_k^2 -z }{\lambda_k^2 - z} \right) =\alpha_k(z)
\end{align*}
where $\alpha_k(z) > 0$ is the aperture of the angle at which an observer at the point $z\in\C_+$ 
sees the interval $[\mu_k^2, \lambda_k^2]$. Since for the whole real line $\R$ the aperture is 
$\pi$ and the intervals $[\mu_k^2, \lambda_k^2]$ do not intersecr $\R_-=(-\infty, 0)$,  we can 
conclude that
\begin{align*}
0 < -\sum_{k\ge1} \Arg \left( \frac{\mu_k^2 -z }{\lambda_k^2 - z} \right) < \pi , 
\end{align*}
so $-\im \Phi(z)>0$ for all $z\in\C_+$. 
\end{proof}

\begin{lm}
\label{l:Phi unique} The function $\Phi$ defined by \eqref{e:Phi} is the only analytic in 
$\C\setminus\sigma$ function satisfying properties \cond1--\cond4 of Lemma \ref{l: properties Phi}. 
\end{lm}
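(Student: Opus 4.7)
My plan is to identify $\Phi$ via the integral representation of $-\log\Phi$, which is uniquely determined by the boundary values of its imaginary part on $\R$.

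Since property (iv) gives $\im \Phi < 0$ strictly on $\C_+$, the function $\Phi$ never vanishes there, so one can choose a single-valued analytic branch of $\log \Phi$ on $\C_+$ with $\arg \Phi \in (-\pi, 0)$. The function $f := -\log \Phi$ is then analytic on $\C_+$ with $\im f \in (0, \pi)$---a \emph{bounded} Herglotz function---and by (iii), $f(z) \to 0$ as $z \to \infty$ nontangentially. The standard representation theorem for such functions yields
\begin{align*}
-\log \Phi(z) = \int_\R \frac{\phi(t)\,dt}{t - z}, \qquad z \in \C_+,
\end{align*}
where $\phi(t) \in [0, 1]$ is the a.e.\ boundary value $\phi(t) = \tfrac{1}{\pi}\lim_{\e \downarrow 0} \im f(t + i\e)$. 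Boundedness of $f$ guarantees that the representing measure is absolutely continuous, so $\phi$ determines $f$ uniquely.

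The key step is to compute $\phi$ purely from the data $(\lambda_k^2, \mu_k^2)$. By (i) and (ii), $\Phi$ is real on $\R \setminus \sigma$ with simple zeros at $\mu_k^2$ and simple poles at $\lambda_k^2$. A short sign analysis anchored by $\Phi(+\infty) = 1 > 0$, the intertwining order, and the fact that $\Phi$ changes sign across each simple zero and each simple pole, shows that $\Phi(t) < 0$ precisely on $\bigcup_k (\mu_k^2, \lambda_k^2)$ and $\Phi(t) > 0$ on the complementary intervals of $\R \setminus \sigma$ (including $(-\infty, 0)$). Since $\im \Phi < 0$ on $\C_+$, the boundary argument $\arg \Phi(t + i0)$ equals $0$ where $\Phi(t) > 0$ and $-\pi$ where $\Phi(t) < 0$, so
\begin{align*}
\phi(t) = \mathbf{1}_{\bigcup_k (\mu_k^2, \lambda_k^2)}(t) \quad \text{a.e.}
\end{align*}

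This pins down $-\log \Phi$ on $\C_+$ entirely in terms of the prescribed data, and property (i) extends the uniqueness to all of $\C \setminus \sigma$ by Schwarz reflection. As a consistency check, integrating termwise gives
\begin{align*}
-\log \Phi(z) = \sum_k \int_{\mu_k^2}^{\lambda_k^2} \frac{dt}{t - z} = \sum_k \log \frac{\lambda_k^2 - z}{\mu_k^2 - z},
\end{align*}
which exponentiates exactly to the product \eqref{e:Phi}, matching Lemma \ref{l:Phi converges}. The main obstacle is technical: invoking the bounded-Herglotz representation cleanly so that the representing measure has no singular part and its density is literally the $L^\infty$ boundary value of $\tfrac{1}{\pi}\im f$; once that is in hand, the sign computation and the extension to $\C\setminus\sigma$ are routine.
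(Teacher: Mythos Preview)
Your argument is correct and takes a genuinely different route from the paper's. The paper argues by contradiction: given a second function $\Phi_1$ satisfying \cond1--\cond4, it forms the ratio $\Psi=\Phi/\Phi_1$, observes that $\Psi$ is analytic and zero-free on $\C\setminus\{0\}$, shows (using the Herglotz property and the real symmetry) that $\Psi$ omits all negative real values, and then invokes Picard (or, more elementarily, Casorati--Weierstrass applied to the principal square root $\Psi^{1/2}$) to rule out an essential singularity at $0$; Liouville and $\Psi(\infty)=1$ finish the job. Your approach instead reconstructs $\Phi$ directly: you pass to $f=-\log\Phi$, which is Herglotz with $\im f\in(0,\pi)$, invoke the representation theorem for Herglotz functions with bounded imaginary part (so the representing measure is absolutely continuous with density in $[0,1]$), and then pin down that density as $\mathbf 1_{\bigcup_k(\mu_k^2,\lambda_k^2)}$ by the sign analysis on $\R$. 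The paper's proof is lighter in prerequisites---just Casorati--Weierstrass and Liouville---and treats the accumulation point $0$ with a single stroke. Your proof is more constructive (it rederives the product formula \eqref{e:Phi} rather than merely matching two candidates) and situates the result in the standard Herglotz/exponential-Herglotz framework, at the price of importing the bounded-imaginary-part representation theorem. One small point worth making explicit in your write-up: the claim $\Phi>0$ on $(-\infty,0)$ follows from \cond3 (so $\Phi(-\infty)=1$) together with \cond2 (no zeros on $(-\infty,0)$) and continuity; this is needed to conclude $\phi=0$ there.
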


\begin{proof}
Let $\Phi_1$ be another such function. 
Both functions have simple poles at $\lambda_k^2$ and simple zeros at $\mu_k^2$ (and these are the 
only isolated singularities and zeroes for both functions),  hence $\Psi(z):=\Phi(z)/\Phi_1(z)$ is 
analytic and zero-free in $\C\setminus \{0\}$.

Moreover, for $x\in \R\setminus\{0\}$ we have $\Psi(x)>0$. Indeed, on $\R\setminus 
\sigma\setminus\sigma_1$ the functions $\Phi_{1,2}$ are real and have the same sign, so $\Psi(x)>0$ 
on $\R\setminus \sigma\setminus\sigma_1$. Since $\Psi$ is continuous and zero-free on $\R \setminus 
\{0\}$, this tells us that $\Psi$ is positive on $\R \setminus \{0\}$. 

Next, let us notice that $\Psi(z)$ does not take real negative values. If $\im z>0$, then 
$\im\Phi(z)>0$, $\im\Phi_{1}(z)>0$, so $\Psi(z)=\Phi(z)/\Phi_1(z)$ cannot be negative real. If $\im 
z<0$, the symmetry $\Psi(\overline{z})= \overline{\Psi(z)}$ implies the same conclusion. And, as we 
just discussed above,  on the real line $\Psi$ takes positive real values. 

So, $\Psi$ omits infinitely many points, therefore by the Picard's Theorem the point $0$ is not an 
essential singularity for $\Psi$. Trivial analysis shows that $0$ cannot be a pole, otherwise 
$1/\Psi$ is analytic at 0, which also contradicts to the fact that $\Psi$ can't take negative real 
values. Hence the point $0$ is a removable singularity for function $\Psi$, so $\Psi$ is an entire 
function. 
By Liouville's Theorem, condition $\Psi(\infty) = 1$ implies that $\Psi\equiv 1$ for all $z \in 
\C$, so $\Psi\equiv\Psi_1$

We can avoid using Picard's theorem by considering the square root $\Psi^{1/2}$, where we take the 
principal branch of the square root (cut along the negative half-axis). Since $\Psi$ does not take 
negative real values, the function $\Psi^{1/2}$ is well defined (and analytic) on $\C\setminus 
\{0\}$.  Trivially $\re\Psi(z)^{1/2}\ge 0$, so by the Casorati--Weierstrass Theorem $0$ cannot be 
the essential singularity for $\Psi^{1/2}$. Again, trivial reasoning shows that $0$ cannot be a 
pole, so again, $\Psi^{1/2}$ is an entire function. The condition $\Psi^{1/2}(\infty)=1 $ then 
implies that $\Psi^{1/2}(z)\equiv 1$.
\end{proof}

\subsubsection{Computing \texorpdfstring{$F$}{F} and the spectral measure 
\texorpdfstring{$\rho$}{rho}}
Now the proof of the first part of Theorem \ref{t:Borg 01}  (existence and uniqueness of the triple 
$W$, $W_1$, $p$) is almost completed. Namely, we define $F:= 1-\Phi$, where $\Phi$ is defined by 
\eqref{e:Phi}. 
The function $F_1$ defined by \eqref{e: F F^gamma} has poles exactly at the points $\mu_k^2$. 
Therefore, if we show that $F$ is the Cauchy transform \eqref{e: C rho} of the measure $\rho$ of 
form \eqref{e: spectral measure rho}, then according to the discussion at the end of Section 
\ref{s: C rho} we get the existence of the triple $W$, $W_1$, $p$.  

The uniqueness of the triple $W$, $W_1$, $p$, i.e.~the uniqueness of the measure $\rho$ follows 
from the uniqueness of the function $\Phi$, see Lemma \ref{l:Phi unique}. Namely, if $F$ is the 
Cauchy transform \eqref{e: C rho} of the measure $\rho$ of form \eqref{e: spectral measure rho}, 
and $F_1$ given by \eqref{e: F F^gamma} has poles exactly at the points $\mu_k^2$,  then the 
function $\Phi:=1-F$  satisfies the properties \cond1--\cond4 of Lemma \ref{l: properties Phi}. But 
by Lemma \ref{l:Phi unique} such function $\Phi$ is unique, and so are the function $F$ and so the 
measure $\rho$. 

The fact $F$ is indeed the Cauchy transform of an appropriate measure $\rho$ can be obtained from 
the general theory of Herglotz functions.%
\footnote{a computation will be needed to show that $\rho$ does not have a mass at $0$.} 
However,  we will not use a general theory, but present an elementary proof,  see the lemma below.

%
\begin{align*}
F(z )=\int_\R \frac{d \rho(s)}{s-z}, 
\end{align*}
and that the measure $\rho$ is supported on $\sigma$.   
\begin{lm}
\label{l: Phi decomposition}
The function $\Phi$ defined by \eqref{e:Phi} can be decomposed as
\begin{align}
\label{e: Phi 03}
\Phi(z) = 1 -  \sum_{n\ge 1} \frac{a_n}{\lambda_n^2-z} , 
\end{align}
where 
\begin{align}
\label{e: a_n}
a_n = (\lambda_n^2 - \mu_n^2 ) \prod_{k\ne n} \left(  \frac{\lambda_n^2 - \mu_k^2}{\lambda_n^2 - 
\lambda_k^2}     \right)
\end{align}
\end{lm}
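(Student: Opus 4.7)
My plan is to obtain \eqref{e: Phi 03} by passing to the limit in the partial-fraction expansion of the truncated product $\Phi_N(z):=\prod_{k=1}^N (z-\mu_k^2)/(z-\lambda_k^2)$. Since $\Phi_N$ is a rational function with simple poles at $\lambda_1^2,\dots,\lambda_N^2$ and $\Phi_N(\infty)=1$, ordinary partial fractions give
\[
\Phi_N(z) \;=\; 1 \;-\; \sum_{n=1}^N \frac{r_n^{(N)}}{\lambda_n^2-z}, \qquad r_n^{(N)} := (\lambda_n^2-\mu_n^2)\prod_{\substack{k\le N \\ k\ne n}} \frac{\lambda_n^2-\mu_k^2}{\lambda_n^2-\lambda_k^2}.
\]
The intertwining relations \eqref{intertwine 01} make every factor in the products defining $r_n^{(N)}$ and $a_n$ positive, and for $N\ge n$ each new factor in passing from $r_n^{(N)}$ to $r_n^{(N+1)}$ exceeds $1$; hence $r_n^{(N)}\uparrow a_n$ monotonically as $N\to\infty$.

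The key remaining step is an $N$-uniform bound on $\sum_n r_n^{(N)}$. I would obtain it by comparing the $1/z$-coefficients on either side of the partial-fraction identity: expanding $\Phi_N(z)=\prod_{k\le N}(1-\mu_k^2/z)/(1-\lambda_k^2/z)$ at $z=\infty$ yields $1 + z^{-1}\sum_{k\le N}(\lambda_k^2-\mu_k^2) + O(z^{-2})$, while the partial-fraction side reads $1 + z^{-1}\sum_{n\le N}r_n^{(N)} + O(z^{-2})$. Hence
\[
\sum_{n=1}^N r_n^{(N)} \;=\; \sum_{k=1}^N (\lambda_k^2-\mu_k^2) \;\le\; \sum_{k=1}^N (\lambda_k^2-\lambda_{k+1}^2) \;\le\; \lambda_1^2,
\]
and monotone convergence then gives $\sum_n a_n \le \lambda_1^2 <\infty$.

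To conclude, fix a compact set $K\subset\C\setminus\sigma$ and put $\delta:=\dist(K,\sigma)>0$. Then $|r_n^{(N)}/(\lambda_n^2-z)|\le a_n/\delta$ on $K$ for every $N$ (using $r_n^{(N)}\le a_n$), so $\sum_n a_n/(\lambda_n^2-z)$ converges absolutely and uniformly on $K$, and dominated convergence yields
\[
\sum_{n=1}^N \frac{r_n^{(N)}}{\lambda_n^2-z} \;\longrightarrow\; \sum_{n=1}^\infty \frac{a_n}{\lambda_n^2-z}
\]
uniformly on $K$. Combined with $\Phi_N\to\Phi$ uniformly on $K$ from Lemma \ref{l:Phi converges}, this gives \eqref{e: Phi 03}.

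The only mildly delicate point in the argument is the uniform summability bound $\sum_n r_n^{(N)}\le\lambda_1^2$, which is why I would extract it indirectly via the $1/z$-coefficient comparison rather than trying to estimate each $a_n$ from its explicit formula \eqref{e: a_n}; attempting the latter directly runs into the difficulty that the factors $(\lambda_n^2-\mu_k^2)/(\lambda_n^2-\lambda_k^2)$ with $k>n$ exceed $1$, so telescoping is not visible at the level of a single $a_n$.
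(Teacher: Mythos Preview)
Your proof is correct and follows essentially the same route as the paper: partial fractions for the truncated product $\Phi_N$, monotone convergence $r_n^{(N)}\uparrow a_n$, a uniform summability bound, and dominated convergence combined with $\Phi_N\to\Phi$ from Lemma~\ref{l:Phi converges}. The only difference is where the uniform bound is extracted: the paper evaluates $\Phi_N$ at $z=0$ to obtain $\sum_n a_n^N/\lambda_n^2\le 1$, whereas you read off the $1/z$-coefficient at $z=\infty$ to obtain the exact identity $\sum_{n\le N} r_n^{(N)}=\sum_{k\le N}(\lambda_k^2-\mu_k^2)\le\lambda_1^2$; both bounds serve the same purpose in the subsequent dominated-convergence step.
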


\begin{proof}
Consider functions $\Phi\ci N$
\begin{align*}
\Phi\ci N(z) = \prod_{k=1}^N \left( \frac{z- \mu_k^2  }{z-\lambda_k^2 } \right)
\end{align*}
Trivially
\begin{align}
\label{e: Phi_N}
\Phi\ci N(z) = 1 -  \sum_{n\ge 1} \frac{a_n^N}{\lambda_n^2-z} 
\end{align}
where 
\begin{align*}
a_n^N = (\lambda_n^2-\mu_n^2) \prod_{\substack{k=1 \\ k\ne n}}^N \left(  \frac{\lambda_n^2 - 
\mu_k^2}{\lambda_n^2 - \lambda_k^2}     \right)  \qquad \text{if } n\le N, 
\end{align*}
and $a_n^N=0$ if $n>N$. 

 We know, see Lemma \ref{l:Phi converges} that $\Phi_{N}(z)$ converges   to $\Phi(z)$ uniformly on 
 compact subsets of $  \C \setminus \sigma$.
%
Hence, to prove the lemma it remains to show that 
\begin{align*}
\sum\limits_{n= 1}^N \frac{a_n^N}{\lambda_n^2-z} \to \sum\limits_{n\ge 1} \frac{a_n}{\lambda_n^2-z} 
\qquad \text{as }N\to\infty
\end{align*}
uniformly on compact subsets of $\C\setminus \sigma$. 

Take $z = 0$ in \eqref{e: Phi_N}. Then
\begin{align*}
1-\sum\limits_{n \geq 1}\frac{a_n^N}{\lambda_{n}^2}=\prod\limits_{k=1}^{N}\left( \frac{ \mu_k^2  
}{\lambda_k^2 } \right) >0, 
\end{align*}
so $\sum\limits_{n \geq 1}\frac{a_n^N}{\lambda_{n}^2} \le 1$.

Notice that for any fixed $n$ the sequence $a_n^N \nearrow a_n$  as $N\to\infty$, so 
$\sum\limits_{n \geq 1}\frac{a_n}{\lambda_n^2} \le 1$. 

Take an arbitrary compact $K\subset \C\setminus \sigma$. Clearly for any $z\in K$
\[
\left| \frac{a_n^N}{\lambda_n^2 - z} \right| \le \frac{a_n^N}{\dist(K, \sigma)} \le 
\frac{a_n}{\dist(K, \sigma)} 
\le \frac{\lambda_1^2}{\dist(K, \sigma)} \cdot \frac{a_n}{\lambda_n^2}, 
\]
so the condition $\sum_{n\ge 1} a_n/\lambda_n^2 \le 1$ implies that 
the series $\sum\limits_{n\ge 1} \frac{a_n^N}{\lambda_n^2-z}$ converges uniformly on the compact  
$K$. 
\end{proof}

\subsection{Proof of the abstract Borg's theorem: the trivial kernel condition}
\label{s: trivial kernel condition}

To complete the proof of Theorem \ref{t:Borg 01} is remains to show that the condition $\|W^{-1/2} 
p\|=1$ is equivalent to \eqref{e: norm q = 1 02}, and that if \eqref{e: norm q = 1 02} holds, then 
the condition $\|W^{-1} p\|=\infty$ is equivalent to \eqref{e: q not in ran R 02}. 

Let us first investigate the condition $\|W^{-1/2} p\|=1$. The operator $W$ is the multiplication 
by the independent variable $s$ in the weighted space $L^2(\rho)$, where $\rho= \sum_{k\ge1} a_n 
\delta_{\lambda_n^2}$ with $a_n$ given by 
\eqref{e: a_n}. 
Recall, see Lemma \ref{l: Phi decomposition}, that 
\begin{align}
\label{e: Phi decomp 01}
\prod\limits_{k=1}^{\infty}\Bigg( \frac{z-\mu_{k}^{2}}{z-\lambda_{k}^{2}} \Bigg)
= 1 - \sum\limits_{k=1}^{\infty}\frac{a_k}{\lambda_{k}^{2}-z}
\end{align}  
Plugging the real $x<0$ into \eqref{e: Phi decomp 01} and 
taking the limit of both sides as $x\to 0^-$ we get that   
\begin{align*}
\prod\limits_{k=1}^{\infty}\frac{\mu_{k}^{2}}{\lambda_{k}^{2}}  = 
1-\sum\limits_{k=1}^{\infty}\frac{a_{k}}{\lambda_k^2};  
\end{align*}
the interchange of limit and sum (product) can be justified, for example, by the monotone 
convergence theorem. 

Therefore $\sum_{k\ge 1} a_k/\lambda_k^2 = 1$ if and only if $\prod_{k\ge0} 
(\mu_k^2/\lambda_k^2)=0$. The latter condition is equivalent to 
\begin{align*}
\sum_{k\ge1} \left( 1- \frac{\mu_k^2}{\lambda_k^2}  \right) =\infty,
\end{align*}
which  is exactly the condition \eqref{e: norm q = 1 02}.

Now, let us investigate the condition $\|W^{-1} p\|=\infty$. It can be rewritten as 
\begin{align}
\label{e: sum a lambda^4}
\sum_{k\ge 1} \frac{a_k}{\lambda_k^4} = \infty.
\end{align}
Rewriting identity \eqref{e: Phi decomp 01} as
\begin{align*}
\prod_{k=1}^{\infty} \left( \frac{z- \mu_k^2  }{z-\lambda_k^2 } \right) &= 
1-\sum\limits_{k=1}^{\infty}a_{k}\frac{(\lambda_{k}^2-z)+z}{\lambda_{k}^2(\lambda_{k}^2-z)} \\
&= 1-\sum\limits_{k \geq 1}\frac{a_k}{\lambda_k^2} - \sum\limits_{k \geq 1}\frac{a_k z}{\lambda_k^2 
(\lambda_k^2 -z)} ,
\end{align*}
we see that if the condition \eqref{e: norm q = 1 02} holds, then 
\begin{align*}
-\frac{1}{z}\prod\limits_{k=1}^{\infty}\Bigg( \frac{z-\mu_{k}^{2}}{z-\lambda_{k}^{2}} 
\Bigg)=\sum\limits_{k=1}^{\infty}\frac{a_k}{\lambda_k^2(\lambda_{k}^{2}-z)} .  
\end{align*}

Substituting $z=-\lambda_N^2$ we get  that 
\begin{align*}
\frac{1}{\lambda_N^2}\prod\limits_{k=1}^{\infty}\Bigg( \frac{\lambda_N^2 + \mu_{k}^{2}}{\lambda_N^2 
+\lambda_{k}^{2}} \Bigg)=\sum\limits_{k=1}^{\infty}\frac{a_k}{\lambda_k^2(\lambda_N^2 
+\lambda_{k}^{2})} .
\end{align*}
By the monotone convergence theorem 
\begin{align*}
\lim_{N\to\infty} \sum\limits_{k\ge 1}\frac{a_k}{\lambda_k^2(\lambda_N^2 +\lambda_{k}^{2})} =
\sum\limits_{k\ge 1}\frac{a_k}{\lambda_k^4} , 
\end{align*}
so the condition \eqref{e: sum a lambda^4} can be rewritten as 
\begin{align}
\label{e: prod1 = infty}
\lim_{N\to\infty} \frac{1}{\lambda_N^2}\prod\limits_{k=1}^{\infty}
\Bigg( \frac{\lambda_N^2 + \mu_{k}^{2}}{\lambda_N^2 +\lambda_{k}^{2}} \Bigg) =\infty
\end{align}
The following simple lemma completes the proof. 

\begin{lm}
\label{l: prod = infy 02}
The condition \eqref{e: prod1 = infty} holds if and only if 
\begin{align*}
\sum_{k\ge 1}\left(\frac{\mu_k^2}{\lambda_{k+1}^2} -1\right) = \infty .
\end{align*}
\end{lm}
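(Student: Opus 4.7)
The plan is to introduce a telescoping trick that transforms the product into one whose factors exceed $1$, after which the equivalence follows by monotone convergence. Specifically, since $\lambda_k \searrow 0$, one has the telescoping identity
\begin{align*}
\prod_{k=1}^{\infty} \frac{\lambda_N^2 + \lambda_{k+1}^2}{\lambda_N^2 + \lambda_k^2}
= \lim_{M\to\infty} \frac{\lambda_N^2 + \lambda_{M+1}^2}{\lambda_N^2 + \lambda_1^2}
= \frac{\lambda_N^2}{\lambda_N^2 + \lambda_1^2}.
\end{align*}
Multiplying this into the given product shifts the $\lambda_k$ indices by one, yielding
\begin{align*}
\frac{1}{\lambda_N^2}\prod_{k=1}^{\infty}\frac{\lambda_N^2+\mu_k^2}{\lambda_N^2+\lambda_k^2}
= \frac{1}{\lambda_N^2+\lambda_1^2}\prod_{k=1}^{\infty}\frac{\lambda_N^2+\mu_k^2}{\lambda_N^2+\lambda_{k+1}^2}
=: \frac{Q_N}{\lambda_N^2+\lambda_1^2}.
\end{align*}
Since $\lambda_N^2+\lambda_1^2 \to \lambda_1^2$, the condition \eqref{e: prod1 = infty} reduces to $Q_N\to\infty$.

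Next, I would observe that the intertwining relation $\mu_k^2 > \lambda_{k+1}^2$ makes every factor of $Q_N$ at least $1$, and a direct differentiation in the variable $\lambda_N^2$ shows each factor is monotonically nonincreasing in $\lambda_N^2$. Consequently, as $N\to\infty$ (equivalently $\lambda_N^2 \searrow 0$), each factor increases monotonically to its supremum $\mu_k^2/\lambda_{k+1}^2$. Applying the monotone convergence theorem to the logarithmic sum $\log Q_N = \sum_k \log\!\bigl(\lambda_N^2+\mu_k^2\bigr) - \log\!\bigl(\lambda_N^2+\lambda_{k+1}^2\bigr)$, in which every summand is nonnegative and monotone in $N$, gives
\begin{align*}
\lim_{N\to\infty} Q_N = \prod_{k=1}^{\infty} \frac{\mu_k^2}{\lambda_{k+1}^2}.
\end{align*}

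Finally, writing $x_k := \mu_k^2/\lambda_{k+1}^2 - 1 \ge 0$, the product above diverges to $+\infty$ iff $\sum_k \log(1+x_k) = \infty$. Since $\log(1+x) \le x$ always, divergence of $\sum \log(1+x_k)$ forces $\sum x_k = \infty$; conversely if $\sum x_k=\infty$, then either $x_k\not\to 0$ (giving a trivial lower bound on infinitely many terms $\log(1+x_k)$) or $x_k\to 0$ (so $\log(1+x_k)\sim x_k$), and in either case $\sum \log(1+x_k)=\infty$. This completes the equivalence.

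There is no real obstacle; the only thing to be slightly careful about is justifying the interchange of limits in passing from $Q_N$ to $\prod \mu_k^2/\lambda_{k+1}^2$, but monotonicity in $N$ of each factor makes this automatic. The rest is bookkeeping.
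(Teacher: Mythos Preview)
Your proof is correct and takes a genuinely different, more elementary route than the paper's.

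The paper first shows that the tail $\prod_{k\ge N}\frac{\lambda_N^2+\mu_k^2}{\lambda_N^2+\lambda_k^2}$ is bounded above and below by absolute constants, thereby reducing to the \emph{finite} product $\frac{1}{\lambda_N^2}\prod_{k=1}^{N-1}\frac{\lambda_N^2+\mu_k^2}{\lambda_N^2+\lambda_k^2}$. After the same index shift you perform (but only on finitely many factors), it introduces the analytic function $G_N(z)=\prod_{k=1}^{N-1}\frac{\mu_k^2-z}{\lambda_{k+1}^2-z}$, observes that $\ln|G_N|$ is nonnegative harmonic on a disc of radius $2\lambda_N^2$ centered at $-\lambda_N^2$, and invokes the Harnack inequality to compare $G_N(-\lambda_N^2)$ with $G_N(0)=\prod_{k=1}^{N-1}\mu_k^2/\lambda_{k+1}^2$.

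Your argument avoids both the tail estimate and the Harnack step: the infinite telescoping identity shifts the denominators globally to $\lambda_{k+1}^2$, after which every factor of $Q_N$ exceeds $1$ and is monotone in $\lambda_N^2$, so monotone convergence on the log-sum does all the work. This is shorter and uses no complex or potential-theoretic tools. The paper's approach, on the other hand, illustrates a robust technique (Harnack comparison of values of a positive harmonic function at two nearby points) that would survive in settings where the clean monotonicity you exploit is unavailable.
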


\begin{proof}
First of all notice that 
\begin{align}
\label{e: prod bounds 01}
0 < C \leq \prod\limits_{k=N}^{\infty}\Bigg( 
\frac{\lambda_{N}^{2}+\mu_{k}^{2}}{\lambda_{N}^{2}+\lambda_{k}^{2}} \Bigg) \leq 1 
\end{align}
with $C>0$ independent of $N$. 

Indeed, for all $k\ge N$ we trivially have
\begin{align}
\label{e: term bound 01}
\frac12 \le \frac{\lambda_{N}^{2}+\mu_{k}^{2}}{\lambda_{N}^{2}+\lambda_{k}^{2}} \le 1 .
\end{align}
The upper bound in \eqref{e: term bound 01} trivially implies the upper bound in \eqref{e: prod 
bounds 01}. 

To get the lower bound in \eqref{e: prod bounds 01} we use the estimate \eqref{e: term bound 01} 
and the inequality
\begin{align*}
\ln x \ge (\ln2) (x-1), \qquad \forall\  x \in [1/2, 1]. 
\end{align*}
Thus 
\begin{align*}
\sum_{k=N}^\infty \ln2 \left( \frac{\lambda_{N}^{2}+\mu_{k}^{2}}{\lambda_{N}^{2}+\lambda_{k}^{2}} 
-1   \right) 
= \sum_{k=N}^\infty  -\ln2  \frac{\lambda_{k}^{2} - \mu_{k}^{2}}{\lambda_{N}^{2}+\lambda_{k}^{2}}
\ge -\ln2 \sum_{k=N}^{\infty} \frac{\lambda_k^2-\mu_k^2}{\lambda_N^2}
\ge -\ln2
\end{align*}
we see that the lower bound in \eqref{e: prod bounds 01} holds with $C_1=\frac{1}{2}$. 

The estimate \eqref{e: prod bounds 01} implies that the condition \eqref{e: prod1 = infty} is 
equivalent to 
\begin{align*}
\lim_{N\to\infty} \frac{1}{\lambda_N^2}\prod\limits_{k=1}^{N-1}
\Bigg( \frac{\lambda_N^2 + \mu_{k}^{2}}{\lambda_N^2 +\lambda_{k}^{2}} \Bigg) =\infty.  
\end{align*}
Since $\lambda_N^2 \le \lambda_N^2 + \lambda_k^2 \le 2 \lambda_N^2$ for $k\ge N$, the above 
condition is equivalent to 
\begin{align}
\label{e: prod1 = infty 02}
\lim_{N\to\infty} 
\prod\limits_{k=1}^{N-1}
\Bigg( \frac{\lambda_N^2 + \mu_{k}^{2}}{\lambda_N^2 +\lambda_{k+1}^{2}} \Bigg) =\infty, 
\end{align}
Denote
\begin{align*}
G_{N}(z) := 
\prod\limits_{k=1}^{N-1}\Bigg( \frac{\mu_{k}^{2} - z}{\lambda_{k+1}^{2} - z} \Bigg)
\end{align*}
The function $G_N$ is analytic in the half-plane $\re z < \lambda_{N}^2$ and satisfies the 
inequality $|G_N(z)|\ge 1$ there. Therefore the function $\ln|G_N|$ is harmonic and non-negative in 
the disc $D_N$ of radius $2\lambda_N^2$ centered at $-\lambda_N^2$.  So by the Harnack inequality 
\begin{align*}
\frac13 \ln |G_N(-\lambda_N^2)| \le \ln |G_N(0)| \le 3  \ln |G_N(-\lambda_N^2)| .
\end{align*}

Note that $G_N(0) = \prod_{k=1}^{N-1} \mu_k^2/\lambda_{k+1}^2 $, so the condition \eqref{e: prod1 = 
infty 02} (which translates to the condition $\lim_{N\to\infty} G\ci N (-\lambda_N^2) =\infty$)  is 
equivalent to 
\begin{align*}
\lim_{N\to\infty} G\ci N(0) =\lim_{N\to\infty} \prod_{k=1}^{N-1} \frac{\mu_k^2}{\lambda_{k+1}^2} 
=\infty.
\end{align*}
The latter condition is equivalent to the condition 
\begin{align*}
\prod_{k\ge 1} \frac{\mu_k^2}{\lambda_{k+1}^2} = \infty, 
\end{align*}
which in turn is equivalent to \eqref{e: q not in ran R 02}. 
\end{proof}

\def\cprime{$'$}
  \def\lfhook#1{\setbox0=\hbox{#1}{\ooalign{\hidewidth\lower1.5ex\hbox{'}\hidewidth\crcr\unhbox0}}}
\providecommand{\bysame}{\leavevmode\hbox to3em{\hrulefill}\thinspace}
\providecommand{\MR}{\relax\ifhmode\unskip\space\fi MR }
\providecommand{\MRhref}[2]{%
  \href{http://www.ams.org/mathscinet-getitem?mr=#1}{#2}
}

 \end{document}